\newtheorem{defn}{Definition}[section]
\newtheorem{thm}{Theorem}[section]
\newtheorem{prop}{Proposition}[section]
\newtheorem{rmk}{Remark}[section]
\newtheorem{lma}{Lemma}[section]
\newtheorem{exm}{Example}[section]
\def\N{{\rm I\kern-0.16em N}}
\def\R{{\rm I\kern-0.16em R}}
\def\E{{\rm I\kern-0.16em E}}
\def\P{{\rm I\kern-0.16em P}}
\def\F{{\rm I\kern-0.16em F}}
\def\B{{\rm I\kern-0.16em B}}
\def\C{{\rm I\kern-0.46em C}}
\def\G{{\rm I\kern-0.50em G}}
\newcommand{\ud}{\mathrm{d}}
\newcommand{\law}{\stackrel{\text{law}}{=}}
\newcommand{\hnorm}[1]{\Vert #1\Vert_{\tilde{\mathcal{H}}}}
\numberwithin{equation}{section}
\font\eka=cmex10
\def\ind{\mathrel{\hbox{\rlap{%
\hbox to 7.5pt{\hrulefill}}\raise6.6pt\hbox{\eka\char'167}}}}
\begin{document}
\title{\textbf{Parameter estimation based on discrete observations of fractional Ornstein-Uhlenbeck process of the second kind}}
\author{Ehsan Azmoodeh \and Lauri Viitasaari}
\renewcommand{\thefootnote}{\fnsymbol{footnote}}

\author{Ehsan Azmoodeh\footnotemark[1] \, and \, Lauri Viitasaari\footnotemark[2]}

\footnotetext[2]{Department of Mathematics and System Analysis, Aalto University School of Science, Helsinki P.O. Box 11100, FIN-00076 Aalto,  FINLAND.}

\footnotetext[1]{Mathematics Research Unit, Luxembourg University, P.O. Box L-1359, Luxembourg, ehsan.azmoodeh@uni.lu.}
\maketitle

\begin{abstract}
Fractional Ornstein-Uhlenbeck process of the second kind $(\text{fOU}_{2})$ is solution of the Langevin equation $\mathrm{d}X_t = -\theta X_t\,\mathrm{d}t+\mathrm{d}Y_t^{(1)}, \ \theta >0$ with Gaussian driving 
noise $ Y_t^{(1)} := \int^t_0 e^{-s} \,\mathrm{d}B_{a_s}$, where $ a_t= H e^{\frac{t}{H}}$ and $B$ is a fractional Brownian motion 
with Hurst parameter $H \in (0,1)$. In this article, we consider the case $H>\frac{1}{2}$. Then using the ergodicity of $\text{fOU}_{2}$ process, we construct consistent estimators of drift parameter $\theta$ based 
on discrete observations in two possible cases: $(i)$ the Hurst parameter $H$ is known and $(ii)$ the Hurst parameter $H$ is unknown. Moreover, using Malliavin calculus technique, we prove central limit theorems for our estimators which is valid for the 
whole range $H \in (\frac{1}{2},1)$.

\medskip

\noindent
{\it Keywords:} fractional Ornstein-Uhlenbeck processes, Malliavin calculus, multiple Wiener integrals, central limit theorem (CLT), parameter estimation.

\smallskip

\noindent
{\it 2010 AMS subject classification:} 60G22, 60H07, 62F99  
\end{abstract}

\tableofcontents

\section{Introduction}

\subsection{Motivation and overwive}
Assume $B=\{B_t\}_{ t\geq 0}$ is a fractional Brownian motion with Hurst parameter $H \in (0,1)$, i.e a continuous, 
centered Gaussian process with covariance function
\begin{equation*}
R_{H}(s,t)= \frac{1}{2}\{ s^{2H} + t^{2H} - \vert t -s \vert^{2H} \}, \quad s,t \ge 0.
\end{equation*}

Consider the following Langevin equation with drift parameter $\theta >0$ and driving noise $N$
\begin{equation}\label{LfOU2}
 \mathrm{d}X_t = -\theta X_t\,\mathrm{d}t+\mathrm{d} N_t.
 \end{equation}

When the driving noise $N=B$ is fractional Brownian motion, a solution of the Langevin equation $(\ref{LfOU2})$ is called the fractional Ornstein-Uhlenbeck process of the first kind, 
in short $(\text{fOU}_{1})$. The fractional Ornstein-Uhlenbeck process of the second kind is a solution of the 
Langevin equation $(\ref{LfOU2})$ when the driving noise $N_t = Y^{(1)}_t := \int^t_0 e^{-s} \,\mathrm{d}B_{a_s}$ and $\ a_t= H e^{\frac{t}{H}}$. The terms \textit{``of the first kind''} and 
\textit{``of the second kind''} are taken from Kaarakka \& Salminen \cite{k-s}. It is well known that the classical Ornstein-Uhlenbeck process, i.e. when the driving noise $N=W$ is a standard Brownian motion, 
has the same finite dimensional distributions as the Lamperti transformation (see \ref{eq:lamperti} for definition) of Brownian motion. Surprisingly, when one 
replaces Brownian motion with fractional Brownian motion the solution of the Langevin equation $(\ref{LfOU2})$ is completely different from the one that is obtained by the Lamperti 
transformation of fractional Brownian motion, see \cite{C-K-M, k-s}. The motivation behind introducing the noise process 
$N=Y^{(1)}$ is related to the Lamperti transformation of fractional Brownian motion. We refer to Subsection \ref{fou2} and in more details to \cite[Section $3$]{k-s}.\\

Typically, statistical models with fractional processes exhibit short (or long) memory property whether $H<\frac{1}{2}$ (or $H>\frac{1}{2}$). However, from statistical point of view, regardless of the range of 
Hurst parameter $H$, the $\text{fOU}_{2}$ process unlike the $\text{fOU}_{1}$ process always exhibits short memory property. This phenomenon makes $\text{fOU}_{2}$ an interesting process for modeling 
applications in many different disciplines. For example, for applications of short memory processes in econometric or in modeling the extremes of time series see \cite{short-econometric,ch-da} respectively. \\

In this article, we take advantage of the ergodicity of $\text{fOU}_{2}$ process to construct consistent estimator of the drift parameter $\theta$ based on observations of the process at discrete times. Assume 
that we observe the process at discrete times $0, \Delta_N, 2\Delta_N, \cdots, N\Delta_N$ and let $T_N = N\Delta_N$ denote the length of the observation window. Our aim is to show that: \\

 \textbf{(i) when $H$ is known} one can construct a strongly consistent estimator $\widehat{\theta}$, introduced in Theorem \ref{main_theorem}, with asymptotic normality property under the mesh conditions
 \begin{equation*}
 T_N \to \infty, \quad \text{and} \quad N\Delta_{N}^{2} \to 0
 \end{equation*}
with arbitrary mesh $\Delta_N$ such that $\Delta_N \to 0$ as $N$ tends to infinity.\\

\textbf{(ii) when $H$ is unknown} one can construct another strongly consistent estimator $\widetilde{\theta}$, introduced in Theorem \ref{thm:main2}, with asymptotic normality property under the restricted mesh condition
\begin{equation*}
\Delta_N = N^{-\alpha}, \quad \text{with} \quad \alpha \in (\frac{1}{2}, \frac{1}{4H-2}\wedge 1).
\end{equation*}

 \subsection{History and further motivations}
 Statistical inferences of drift parameter $\theta$ based on data recorded from continuous $($discrete$)$ trajectories of $X$ is an interesting problem in the realm of mathematical statistics. In the case of 
 diffusion processes with Brownian motion as driving noise the problem is well studied. See for example \cite{kut} and references therein among many others. The problem of estimation of drift parameter 
 becomes very challenging with fractional processes as driving noise. This is mainly because of the fact that fractional Brownian motion $B$ with Hurst parameter $H \neq \frac{1}{2}$ is neither a semimartingale 
 nor a Markov process. We refer to the recent book \cite{rao} for more details in this regards. In the case of fractional Ornstein-Uhlenbeck process of the first kind, the two popular statistical estimators, 
 namely maximum likelihood $(\text{MLE})$ and least squares $(\text{LSE})$  estimators based on continuous observations of the process are considered in Kleptsyna \& Breton \cite{k-l} and Hu \& Nualart \cite{h-n} 
 respectively. In this case it turns out that \text{MLE} and LSE provide strongly consistent estimators. Moreover, the asymptotic normality of MLE is shown in \cite{b-c-s} when $H > \frac{1}{2}$  and for LSE 
 in \cite{h-n} when $H \in [\frac{1}{2}, \frac{3}{4})$. In the case of fractional Ornstein-Uhlenbeck process of the second kind, Azmoodeh \& Morlanes \cite{a-m} showed that LSE is a consistent estimator using 
 continuous observations. Moreover, they showed that a central limit theorem for LSE holds for the whole range $H > \frac{1}{2}$.\\
 
The main feature of this paper is to provide strongly consistent estimators of drift parameter $\theta$ based on discrete observations of the process $X$, and more importantly to show they satisfy CLTs using the modern 
approach of Malliavin calculus for normal approximations \cite{n-p}. It is very important from practical point of view to assume that we have a data collected from process $X$ observed at discrete times. 
In addition to its applicability, such a demand makes the problem more delicate. Therefore, such problem could not be remained open for the fractional 
Ornstein-Uhlenbeck process of the first kind. In fact for the $\text{fOU}_{1}$ process, estimation of drift parameter $\theta$ with discretization procedure of integral transform is considered in 
Xiao et. al. \cite{x-z-x} assuming that Hurst parameter $H$ is known. In the same setup, Brouste \& Iacus \cite{b-i} introduce an estimation procedure that can estimate both drift parameter 
$\theta$ and Hurst parameter $H$ based on discrete observations. In this paper, we also display a new estimation method that can estimate drift parameter 
$\theta$ of the $\text{fOU}_{2}$ process based on discrete observations when Hurst parameter $H$ is unknown (Theorem \ref{thm:main2}).\\

\subsection{Plan}
The paper is organized as follows. In section 2, we give auxiliary facts on Malliavin calculus and fractional Ornstein-Uhlenbeck processes. Section \ref{Hknown} is devoted to estimation of drift parameter when $H$ is known.
In section \ref{estimationH}, we give a short explanation of estimation of Hurst parameter $H$ based on discrete observations. Section \ref{unknownH} deals with estimation of drift parameter when $H$ is unknown. We also collect
all technical computations to appendix A.\\

\section{Auxiliary facts}

\subsection{A brief review on Malliavin calculus}\label{Malliavin}
In this subsection, we briefly introduce some basic facts on Malliavin calculus with respect to Gaussian processes. Also the use of Malliavin calculus to obtain central limit theorem for a sequence of multiple Wiener 
integrals is now well established.  For more details, we refer to \cite{a-m-n,nu1,n-p}. Let $W$ be a Brownian motion. Assume that $G=\{ G_t \}_{t\in [0,T]}$ a continuous centered Gaussian process of the form 
\begin{equation*}
 G_t = \int_{0}^{t} K(t,s) \ud W_s
\end{equation*}
where the \textit{Volterra} kernel $K$, meaning that $K(t,s)=0$ for all $s>t$, satisfies $\sup_{t \in [0,T]} \int_{0}^{t}K(t,s)^{2} \ud s < \infty$. Moreover, we assume that for any $s$ the function $K(\cdot,s)$ 
is bounded variation on any interval $(u,T]$ for all $u >s$. A typical example of this type of Gaussian processes is fractional Brownian motion $B$. It is known that when $H>\frac{1}{2}$, the kernel takes the form 
\begin{equation*}
K_{H}(t,s)= c_H s^{\frac{1}{2} - H} \int_{s}^{t} (u - s)^{H - \frac{3}{2}} u^{H - \frac{1}{2}} \ud u.
\end{equation*}
Moreover, we have the following inverse relation 
\begin{equation}\label{BW}
 W_t = B \big( (K^{*}_{H})^{-1} (\textbf{1}_{[0,t]})\big)
\end{equation}
where the operator $K^{*}_{H}$ is defined as 

\begin{equation*}
 (K^{*}_{H}\varphi)(s)= \int_{s}^{T} \varphi(t) \frac{\partial K_H}{\partial t} (t,s) \ud t.
\end{equation*}

Consider the set $\mathcal{E}$ of all step functions on $[0,T]$. The Hilbert space $\mathcal{H}$ associated to process $G$ is the closure of $\mathcal{E}$ with respect
to inner product
\begin{equation*}
 \langle \mathbf{1}_{[0,t]},\mathbf{1}_{[0,s]} \rangle_{\mathcal{H}} = R_{G}(t,s)
\end{equation*}
where $R_G(t,s)$ denotes the covariance function of $G$. Then the mapping $\mathbf{1}_{[0,t]} \mapsto G_t$ can be extended to an isometry between Hilbert space $\mathcal{H}$ and Gaussian space $\mathcal{H}_1$ 
associated with Gaussian process 
$G$. Consider the space $\mathcal{S}$ of all smooth random variables of the form 
\begin{equation}\label{eq:smooth}
F= f(G(\varphi_1), \cdots, G(\varphi_n)), \qquad \varphi_1, \cdots, \varphi_n \in \mathcal{H},
\end{equation}
where $f \in C_{b}^{\infty}(\R^n)$. For any smooth random variable $F$ of the form (\ref{eq:smooth}), we define its Malliavin derivative $D^{(G)}= D $ as an element of 
$L^{2}(\Omega;\mathcal{H})$ by

\begin{equation*}
D F= \sum_{i=1}^{n} \partial_{i} f (G(\varphi_1), \cdots, G(\varphi_n)) \varphi_i.
\end{equation*}
In particular, $D G_t = \mathbf{1}_{[0,t]}$. We denote by $\mathbb{D}^{1,2}_{G}= \mathbb{D}^{1,2}$ the Hilbert space of all square integrable Malliavin derivative random variables as the 
closure of the set $\mathcal{S}$ of smooth random variables with respect to the norm 

\begin{equation*}
\Vert F \Vert_{1,2}^{2} = \E |F|^{2} + \E ( \Vert D F \Vert_{\mathcal{H}} ^{2}).
\end{equation*}


Consider the linear operator $K^{*}$ from $\mathcal{E}$ to $L^{2}[0,T]$ defined by
\begin{equation*}
 (K^{*} \varphi)(s) = \varphi(s)K(T,s) + \int_{s}^{T} \left[ \varphi(t) - \varphi(s) \right] K(\ud t,s).
\end{equation*}
Here, $K(\ud t,s)$ stands for the measure associated to the bounded variation function $K(\cdot,s)$. The Hilbert space $\mathcal{H}$ generated by covariance function of the Gaussian process $G$ can be represented as $\mathcal{H} = (K^{*})^{-1} (L^{2}[0,T])$ and 
$\mathbb{D}^{1,2}_{G}(\mathcal{H}) =(K^{*})^{-1} \big(\mathbb{D}^{1,2}_{W}(L^{2}[0,T])\big)$. For any $n \ge 1$, let $\mathscr{H}_n$ be the $n$th Wiener chaos of $G$, i.e.
the closed linear subspace of $L^2 (\Omega)$ generated by the random variables 
$\{ H_n \left( G(\varphi) \right),\ \varphi \in \mathcal{H}, \ \Vert \varphi \Vert_{\mathcal{H}} = 1\}$ where $H_n$ is the $n$th Hermite polynomial. It is well known that
the mapping $I_{n}^{G}(\varphi^{\otimes n}) = n! H_n \left( G(\varphi)\right)$ provides a linear isometry between the symmetric tensor product $\mathcal{H}^{\odot n}$
and subspace $\mathscr{H}_n$. The random variables $I_{n}^{G}(\varphi^{\otimes n})$ are customary called \textit{multiple Wiener} integrals of order $n$ with respect to Gaussian process $G$. When $G$ is Brownian 
motion, the random variables $I_{n}^{G}$ coincide with multiple It$\hat{o}$ integrals.\\ 

The next proposition provides a central limit theorem for a sequence of multiple Wiener integrals of fixed order. Let $\mathcal{N}(0,\sigma^2)$ denote the Gaussian distribution with zero mean and variance 
$\sigma^2$. The notation $\overset{\text{law}}\longrightarrow$ stands for convergence in distribution.


\begin{prop}\cite{n-o}\label{CLT} Let $\{F_n\}_{n \ge 1}$ be a sequence of random variables in the $q$th Wiener chaos $\mathscr{H}_q$  with $q \ge2$ such that $\lim_{n \to \infty} \E(F_n ^2) = \sigma^2$. Then the following 
statements are equivalent:
 \begin{itemize}
 \item[(i)] $F_n \overset{\text{law}}\longrightarrow \mathcal{N}(0,\sigma^2)$ as $n$ tends to infinity.
 
 \item[(ii)] $ \Vert DF_n \Vert^{2}_{\mathcal{H}}$ converges in $L^{2}(\Omega)$ to $q \sigma^{2}$ as $n$ tends to infinity.
 \end{itemize}
\end{prop}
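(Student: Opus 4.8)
The plan is to establish the two implications separately: I would treat (ii) $\Rightarrow$ (i) by a characteristic-function argument and the converse (i) $\Rightarrow$ (ii) by combining hypercontractivity with the product formula for multiple Wiener integrals. The engine behind both directions is the interplay between the Malliavin derivative $D$, its adjoint the divergence operator $\delta$, and the Ornstein--Uhlenbeck generator $L=-\delta D$. The crucial algebraic fact is that any $F_n\in\mathscr{H}_q$ is an eigenvector of $L$ with $LF_n=-qF_n$, so that $\delta D F_n = qF_n$, i.e. $F_n=\frac{1}{q}\delta D F_n$. Combined with the duality relation $\E[(\delta u)G]=\E[\langle u, DG\rangle_{\mathcal H}]$, this also gives $\E[\Vert DF_n\Vert_{\mathcal H}^2]=\E[F_n\,\delta DF_n]=q\,\E[F_n^2]$, a normalization I will use repeatedly.

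For (ii) $\Rightarrow$ (i), I would set $\varphi_n(t)=\E[e^{itF_n}]$ and differentiate to get $\varphi_n'(t)=i\,\E[F_n e^{itF_n}]$. Writing $F_n=\frac1q\delta DF_n$, applying duality with $G=e^{itF_n}$, and using $D(e^{itF_n})=it\,e^{itF_n}DF_n$ yields
\begin{equation*}
\varphi_n'(t) = -\frac{t}{q}\,\E\!\left[e^{itF_n}\Vert DF_n\Vert_{\mathcal H}^2\right].
\end{equation*}
Splitting $\Vert DF_n\Vert_{\mathcal H}^2=q\sigma^2+(\Vert DF_n\Vert_{\mathcal H}^2-q\sigma^2)$, the first piece contributes $-t\sigma^2\varphi_n(t)$, while the remainder is bounded in modulus by $\frac{|t|}{q}\,\E|\Vert DF_n\Vert_{\mathcal H}^2-q\sigma^2|$, which tends to $0$ by hypothesis (ii) and Cauchy--Schwarz. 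Hence $\varphi_n'(t)+t\sigma^2\varphi_n(t)\to 0$ uniformly on compact $t$-intervals; studying $g_n(t):=\varphi_n(t)e^{\sigma^2t^2/2}$, for which $g_n(0)=1$ and $g_n'(t)\to 0$ locally uniformly, forces $\varphi_n(t)\to e^{-\sigma^2t^2/2}$, which is precisely the convergence in law asserted in (i).

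For the converse (i) $\Rightarrow$ (ii), I would first upgrade distributional convergence to convergence of moments. Since all $L^p$-norms on the fixed chaos $\mathscr{H}_q$ are equivalent (hypercontractivity of the Ornstein--Uhlenbeck semigroup), the family $\{F_n^2\}$ is uniformly integrable, so $\E[F_n^4]\to 3\sigma^4$, the fourth moment of $\mathcal N(0,\sigma^2)$. Writing $F_n=I_q^G(f_n)$ and expanding $F_n^2$ through the product formula, both $\E[F_n^4]-3(\E[F_n^2])^2$ and $\mathrm{Var}\big(\tfrac1q\Vert DF_n\Vert_{\mathcal H}^2\big)$ can be expressed as the \emph{same type} of nonnegative linear combination of the squared contraction norms $\Vert f_n\widetilde{\otimes}_r f_n\Vert^2$, $r=1,\dots,q-1$. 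The fourth-moment convergence forces every such contraction norm to vanish, hence $\mathrm{Var}\big(\tfrac1q\Vert DF_n\Vert_{\mathcal H}^2\big)\to 0$. Since the mean satisfies $\E[\tfrac1q\Vert DF_n\Vert_{\mathcal H}^2]=\E[F_n^2]\to\sigma^2$ by the normalization noted above, vanishing variance yields $\Vert DF_n\Vert_{\mathcal H}^2\to q\sigma^2$ in $L^2(\Omega)$, which is (ii).

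The main obstacle is this converse direction: establishing the precise identities that exhibit the fourth cumulant of $F_n$ and the variance of $\tfrac1q\Vert DF_n\Vert_{\mathcal H}^2$ as matching nonnegative combinations of the contraction norms. This requires careful bookkeeping with the product formula and the symmetrization operations, together with the hypercontractive estimate needed to pass from convergence in law to convergence of the fourth moment. The implication (ii) $\Rightarrow$ (i), by contrast, is a comparatively soft application of integration by parts.
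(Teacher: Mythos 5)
The paper offers no proof of this proposition at all---it is imported verbatim from Nualart and Ortiz-Latorre \cite{n-o}---and your argument is a correct reconstruction of exactly that reference's proof: the characteristic-function ODE via $L=-\delta D$ and the duality relation for (ii)$\Rightarrow$(i), and hypercontractivity plus the fourth-moment/contraction identities for (i)$\Rightarrow$(ii). The only loose point is cosmetic: the fourth-cumulant identity also involves the non-symmetrized contractions $\Vert f_n\otimes_r f_n\Vert^2$, but since these dominate the symmetrized ones the chain of implications you describe goes through unchanged.
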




\subsection{Fractional Ornstein-Uhlenbeck processes}\label{fOU}

In this subsection, we briefly introduce fractional Ornstein-Uhlenbeck processes. The main references are \cite{C-K-M,k-s}. We mostly focus on fractional Ornstein-Uhlenbeck process of the second kind. 
Moreover, we provide some new results on fractional Ornstein-Uhlenbeck process of the second kind. 

\subsubsection{Fractional Ornstein-Uhlenbeck processes of the first kind}\label{fou1}
Let $B=\{B_t\}_{ t\geq 0}$ be a fractional Brownian motion with Hurst parameter $H \in (0,1)$. To obtain fractional Ornstein-Uhlenbeck process, consider the following Langevin equation

\begin{equation}\label{Langevin-first}
 dU^{(H,\xi_0)}_t =-\theta U^{(H,\xi_0)}_t dt + dB_t, \quad U^{(H,\xi_0)}_0=\xi_0.
\end{equation}

 The solution of the SDE $(\ref{Langevin-first})$ can be expressed as

\begin{equation}\label{generalfOU1}
U^{(H,\xi_0)}_t = e^{-\theta t} \left( \xi_{0} + \int^t_0 e^{\theta s}\,\ud B_s\right).
\end{equation}

Notice that the stochastic integral can be understood as a pathwise Riemann-Stieltjes integral or, equivalently, as Wiener integral. Let $\hat{B}$ denote a two sided fractional Brownian motion. 
The special selection
\begin{equation*}
\xi_{0} := \int^0_{-\infty} e^{\theta s}\,\ud \hat B_s
\end{equation*}
 leads to a unique (in the sense of finite dimensional distributions) stationary Gaussian process $U^{(H)}$ of the form 

\begin{equation}\label{U^{(H)}}
U^{(H)}_t = \int^t_{-\infty} e^{-\theta (t-s)}\,\ud \hat B_s.
\end{equation}

\begin{defn}\cite{k-s}
We call the process $U^{(H,\xi_0)}$ given by $(\ref{generalfOU1})$ a fractional Ornstein-Uhlenbeck process of the first kind with initial value $\xi_0$. 
The process $U^{(H)}$ defined in $(\ref{U^{(H)}})$ is called 
stationary fractional Ornstein-Uhlenbeck process of the first kind.
\end{defn}

\begin{rmk}{ \rm
 It is shown in \cite{C-K-M} that the covariance function of the stationary process $U^{(H)}$ decays like a power function. Hence it is ergodic and 
 for $H \in (\frac{1}{2},1)$ it 
exhibits long range dependence.
}
\end{rmk}

\subsubsection{Fractional Ornstein-Uhlenbeck processes of the second kind}\label{fou2}
Now we define a new stationary Gaussian process $X^{(\alpha)}$ by means of Lamperti transformation of the fractional Brownian motion $B$. Precisely, we set
\begin{equation}\label{eq:lamperti}
 X_t^{(\alpha)} := e^{-\alpha t}B_{a_t},\quad t\in \R,
\end{equation}
where $\alpha>0$ and $a_t= \frac{H}{\alpha}e^{\frac{\alpha t}{H}}$. We aim to represent the process $X^{(\alpha)}$ as solution of a Langevin equation. To this end, we consider 
the process $Y^{\alpha}_t$ defined via
\begin{equation*}
 Y_t^{(\alpha)} := \int^t_0 e^{-\alpha s} \,\mathrm{d}B_{a_s}, \quad t \ge 0.
\end{equation*} 
As before the stochastic integral can be understood as pathwise Riemann-Stieltjes integral as well as Wiener integral. Using the self-similarity property of fractional Brownian motion one can see 
that (\cite[Proposition 6]{k-s}) the process $Y^{(\alpha)}$ satisfies the following scaling property

\begin{equation}\label{scaling}
 \{ Y^{(\alpha)}_{t / \alpha} \}_{t \ge 0} \stackrel{\text{f.d.d}}{=} \{ \alpha^{-H} Y^{(1)}_{t}\}_{t \ge 0},
\end{equation}
where $\stackrel{\text{f.d.d}}{=}$ stands for equality in finite dimensional distributions. Using $Y^{(\alpha)}$ the process $ X^{(\alpha)}$ can be viewed as the solution of the following Langevin equation
\begin{equation*}
 \mathrm{d}X_t^{(\alpha)} = -\alpha X_t^{(\alpha)}\,\mathrm{d}t+\mathrm{d}Y_t^{(\alpha)}
\end{equation*}
with random initial value $X_0^{(\alpha)}=B_{a_0} = B_{H/\alpha}\sim \mathcal{N}(0, (\frac{H}{\alpha})^{2H})$. Taking into account the scaling property $($\ref{scaling}$)$, we consider the following Langevin 
equation 
\begin{equation}
\label{Langevin}
 \mathrm{d}X_t = -\theta X_t\,\mathrm{d}t+\mathrm{d}Y_t^{(1)},\qquad   \theta > 0.
\end{equation}



with $Y^{(1)}$ as the driving noise. The solution of the equation $(\ref{Langevin})$ is given by 
\begin{equation}\label{generalfOU2}
X_t = e^{- \theta t} \left( X_0 + \int_{0}^{t} e^{\theta s} \,\mathrm{d} Y^{(1)}_s \right) = e^{- \theta t} \left( X_0 + \int_{0}^{t} e^{(\theta -1)s} \,\mathrm{d} B_{a_s} \right)
\end{equation}
with $\alpha=1$ in $a_t$. Notice that the stochastic integral can be understood as pathwise Riemann-Stieltjes integral. The special selection $X_0 = \int^0_{-\infty} e^{(\theta-1) s} \,\mathrm{d}B_{a_s}$ for the 
initial value $X_0$ leads to the following unique stationary Gaussian process 
\begin{equation}\label{U}
 U_t= e^{-\theta t}\int^t_{-\infty} e^{(\theta-1) s} \,\mathrm{d}B_{a_s}.
\end{equation}

\begin{defn}\cite{k-s}
We call the process $X$ given by $(\ref{generalfOU2})$ a fractional Ornstein-Uhlenbeck process of the second kind with initial value $X_0$. The process $U$ defined in $(\ref{U})$ is called the stationary 
fractional Ornstein-Uhlenbeck process of the second kind.
\end{defn}

For the rest of the paper we assume $H> \frac{1}{2}$. In the general solution $(\ref{generalfOU2})$, take the initial value $X_0=0$. Then the corresponding fractional Ornstein-Uhlenbeck process of the second kind 
takes the form
\begin{equation}
\label{FOU2}
X_t  = e^{- \theta t}  \int_{0}^{t} e^{(\theta -1)s} \,\mathrm{d} B_{a_s}.
\end{equation}
Notice that we have the useful relation
\begin{equation}
\label{UX_connection}
U_t = X_t + e^{-\theta t}\xi, \quad \xi = \int_{-\infty}^0 e^{(\theta -1)s}\ud B_{a_s}.
\end{equation}

We start with a series of known results, but required for our purposes, on fractional Ornstein-Uhlenbeck processes of the second kind.

\begin{prop}\cite{a-m}
\label{azmoodeh_lemma}
 Denote $\tilde B_t= B_{t+H} - B_{H}$ the shifted fractional Brownian motion. Let $X$ be the fractional Ornstein-Uhlenbeck process of the second kind given by $(\ref{FOU2})$. Then there exists a Volterra kernel 
 $\tilde{L}$ such that 
 
 \begin{equation}
\{X_t\}_{t \in [0,T]} \stackrel{\text{f.d.d}}{=} \{ \int_0^t e^{-\theta(t-s)}\ud \tilde{G}_s\}_{t \in [0,T]} 
\end{equation}
where the Gaussian process $\tilde{G}$ is given by
$$
\tilde{G}_t = \int_0^t \left( K_H(t,s) + \tilde{L}(t,s) \right) \ud\tilde{W}_s
$$
and the Brownian motion $\tilde{W}$ is related to the shifted fractional Brownian motion $\tilde{B}$ by the inverse formula $(\ref{BW})$.
\end{prop}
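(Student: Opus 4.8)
The plan is to reduce the whole statement to the construction of a Volterra representation for the driving noise $Y^{(1)}$. First I would express $X$ as an exponentially weighted integral of $Y^{(1)}$: since $\ud Y^{(1)}_s = e^{-s}\ud B_{a_s}$, formula (\ref{FOU2}) gives
\[
X_t = e^{-\theta t}\int_0^t e^{(\theta-1)s}\ud B_{a_s} = e^{-\theta t}\int_0^t e^{\theta s}\ud Y^{(1)}_s = \int_0^t e^{-\theta(t-s)}\ud Y^{(1)}_s .
\]
Hence it suffices to produce a Brownian motion $\tilde W$ and a Volterra kernel $\tilde L$ such that $Y^{(1)}_t \stackrel{\text{f.d.d}}{=} \int_0^t \bigl(K_H(t,s)+\tilde L(t,s)\bigr)\ud\tilde W_s =: \tilde G_t$, because then the pathwise Riemann--Stieltjes integral $\int_0^t e^{-\theta(t-s)}\ud\tilde G_s$ inherits the same finite dimensional distributions as $X_t$.

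Second, I would pass from $B$ to the shifted process $\tilde B_u = B_{u+H}-B_H$. Since $a_0=H$ and $B_H$ is constant in $s$, one has $\ud B_{a_s}=\ud\tilde B_{a_s-H}$, so with the smooth strictly increasing time change $b_s := a_s-H = H(e^{s/H}-1)$, $b_0=0$, we get $Y^{(1)}_t = \int_0^t e^{-s}\ud\tilde B_{b_s}$. By stationarity of the increments of $B$, $\tilde B$ is again a fractional Brownian motion and therefore admits the representation $\tilde B_u = \int_0^u K_H(u,v)\ud\tilde W_v$, where $\tilde W$ is exactly the Brownian motion attached to $\tilde B$ via the inverse relation (\ref{BW}). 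Substituting, integrating by parts in $s$ (using $\tilde B_0=0$) to obtain $Y^{(1)}_t = e^{-t}\tilde B_{b_t}+\int_0^t e^{-s}\tilde B_{b_s}\ud s$, and then applying a stochastic Fubini theorem to interchange the $\ud s$ and $\ud\tilde W_v$ integrations, collapses $Y^{(1)}_t$ into a single Wiener integral
\[
Y^{(1)}_t = \int_0^{b_t}\left[e^{-t}K_H(b_t,v)+\int_{s_v}^t e^{-s}K_H(b_s,v)\ud s\right]\ud\tilde W_v ,
\]
where $s_v=b^{-1}(v)$ and we used $K_H(b_s,v)=0$ for $s<s_v$. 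Since $b_t>t$, the index ranges do not match directly, so I would finally reparametrise through $v=b_u$; this rescales $\tilde W$ into a Brownian motion indexed by $[0,t]$ (legitimate since only finite dimensional distributions are claimed), puts the integral in the announced form $\int_0^t \tilde K(t,u)\ud\tilde W_u$, and lets me set $\tilde L:=\tilde K-K_H$.

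The hard part will be this last step: showing that $\tilde L$ is a genuine Volterra kernel with the regularity demanded by the Malliavin-calculus framework of Subsection \ref{Malliavin}, i.e. that $\tilde K$ and $K_H$ carry exactly the same diagonal singularity so that $\tilde L$ is of bounded variation in $t$ away from the diagonal and has integrable, rather than singular, behaviour as $u\uparrow t$. This requires a careful asymptotic analysis of $\tilde K$ near the diagonal: the leading term $e^{-t}K_H(b_t,v)$ reproduces, up to the smooth Jacobian of $b$, the fractional-Brownian singularity $(t-u)^{H-3/2}$, whereas the averaged term $\int_{s_v}^t e^{-s}K_H(b_s,v)\ud s$ is strictly smoother; matching these against $K_H(t,u)$ is where the bulk of the estimates lie.

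A secondary technical point is the justification of the stochastic Fubini interchange, which follows from the square-integrability bound $\sup_{t}\int_0^t K(t,s)^2\ud s<\infty$ imposed on Volterra kernels together with the boundedness of the exponential weights on $[0,T]$. An alternative, less constructive route would compute the covariance of $Y^{(1)}$ explicitly, as in \cite{k-s}, and invoke a canonical Volterra representation together with a matching of the singular part; but the transfer argument above has the advantage of exhibiting the correction kernel $\tilde L$ directly, which is what the subsequent Malliavin-calculus arguments will need.
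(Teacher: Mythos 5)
Note first that the paper itself offers no proof of Proposition \ref{azmoodeh_lemma}: it is imported verbatim from \cite{a-m}, so there is no internal argument to compare against and your attempt must stand on its own. Judged that way, your reduction is sound up to and including the displayed Wiener-integral formula: the rewriting $X_t=\int_0^t e^{-\theta(t-s)}\ud Y^{(1)}_s$, the passage to $\tilde B$ via $b_s=a_s-H$, the integration by parts and the stochastic Fubini step are all correct, and the Fubini interchange is indeed easy to justify. One genuine (if repairable) defect is the reparametrisation $v=b_u$: the driving noise then becomes the Gaussian martingale $u\mapsto\tilde W_{b_u}$, whose normalisation by $\sqrt{b'_u}$ yields a Brownian motion that is \emph{not} the $\tilde W$ attached to $\tilde B$ by formula (\ref{BW}). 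This is harmless for a purely f.d.d.\ statement, but you should say explicitly that you replace this new Brownian motion by an equidistributed copy at the end; as written, the construction does not deliver the representation with the specific $\tilde W$ named in the statement.

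The more serious gap is that the entire content of the proposition --- the existence of a \emph{Volterra kernel} $\tilde L$ in the sense of Subsection \ref{Malliavin}, i.e.\ such that $K_H+\tilde L$ satisfies $\sup_t\int_0^t(K_H+\tilde L)(t,s)^2\ud s<\infty$ and is of bounded variation in $t$ away from the diagonal --- is deferred to ``the hard part'' and never carried out. Your roadmap for that part is also misdirected for $H>\frac12$: the singularity $(t-s)^{H-3/2}$ lives in $\partial K_H/\partial t$, not in $K_H$ itself, which vanishes at the diagonal like $(t-s)^{H-1/2}$ and is singular only at $s=0$ (of order $s^{\frac12-H}$); so the matching you would actually need concerns the behaviour at $s=0$ and the $t$-derivative, not a diagonal blow-up of the kernel. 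The verification is in fact not onerous: square integrability of your kernel $\tilde K(t,u)=F(t,b_u)\sqrt{b'_u}$ is immediate since $\int_0^t\tilde K(t,u)^2\ud u=\E\big(Y^{(1)}_t\big)^2<\infty$, and differentiating $F$ in $t$ makes the two occurrences of $e^{-t}K_H(b_t,\cdot)$ cancel, leaving $e^{-t}b'_t\,\partial_1 K_H(b_t,\cdot)$, which is integrable in $t$ on $(u',T]$ for $u'>s$ because $H-\frac32>-1$; together these give that $\tilde K$, hence $\tilde L:=\tilde K-K_H$, is a legitimate Volterra kernel. Until those two observations are written out, the proposal is an outline of the easy half of the argument rather than a proof.
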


\begin{rmk}\label{inner_hilbert_G}{ \rm
Notice that by a direct computation and applying Lemma 4.3 of \cite{a-m}, the inner product of the Hilbert space $\tilde{\mathcal{H}}$ generated by the covariance function of the Gaussian process $\tilde{G}$ is 
given by
\begin{equation*}
 \langle \varphi,\psi \rangle_{\tilde{\mathcal{H}}} = \alpha_H H^{2H-2}\int_0^T\int_0^T \varphi(u)\psi(v)e^{(u+v)\left(\frac{1}{H}-1\right)}\left|e^{\frac{u}{H}} - e^{\frac{v}{H}} \right|^{2H-2}\ud v\ud u
\end{equation*}
where $\varphi, \psi \in \tilde{\mathcal{H}}$ and $\alpha_H=H(2H-1)$.
}
\end{rmk}


The following lemma plays an essential role in the paper. More precisely, we use this lemma to construct our 
estimators for drift parameter. $B(x,y)$ stands for the complete Beta function with parameters $x$ and $y$.

\begin{prop}\cite{a-m}
\label{lma:limit}
Let $X$ be the fractional Ornstein-Uhlenbeck process of the second kind given by $(\ref{FOU2})$. Then as $T \to \infty$, we have
\begin{equation*}
\label{lemma_as_convergence}
\frac{1}{T}\int_0^T X_t^2\ud t\rightarrow \Psi(\theta)
\end{equation*}
almost surely and in $L^2(\Omega)$, where 
\begin{equation}
\label{limit_function}
\Psi(\theta) = \frac{(2H-1)H^{2H}}{\theta}B((\theta - 1)H + 1, 2H-1).
\end{equation}
\end{prop}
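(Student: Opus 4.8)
The plan is to transfer the problem to the stationary process $U$ of $(\ref{U})$, prove an ergodic theorem there, and then compute the limiting constant explicitly. By $(\ref{UX_connection})$ we have $X_t = U_t - e^{-\theta t}\xi$, so that
\begin{equation*}
\frac{1}{T}\int_0^T X_t^2\,\ud t = \frac{1}{T}\int_0^T U_t^2\,\ud t - \frac{2\xi}{T}\int_0^T e^{-\theta t}U_t\,\ud t + \frac{\xi^2}{T}\int_0^T e^{-2\theta t}\,\ud t.
\end{equation*}
The third term is bounded by $\frac{\xi^2}{2\theta T}$, and the second by $\frac{2|\xi|}{T}\int_0^\infty e^{-\theta t}|U_t|\,\ud t$; since $\E|U_t|$ is constant in $t$ by stationarity and $\xi$ has all moments, both remainder terms converge to $0$ almost surely and in $L^2(\Omega)$. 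Thus it suffices to analyse $\frac{1}{T}\int_0^T U_t^2\,\ud t$ for the stationary process.

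The structural input, which is precisely the short-memory property of $\text{fOU}_2$ highlighted in the introduction, is that the stationary covariance $r(\tau) = \E[U_0 U_\tau]$ decays exponentially; in particular $r(\tau)\to 0$ and $\int_\R r(\tau)^2\,\ud\tau<\infty$. Because a stationary Gaussian process whose covariance vanishes at infinity is ergodic, the continuous-time Birkhoff ergodic theorem applied to the integrable functional $U_0^2$ gives $\frac{1}{T}\int_0^T U_t^2\,\ud t\to\E[U_0^2]$ almost surely. For the $L^2$ convergence I would argue directly: by stationarity the mean equals $\E[U_0^2]$ for every $T$, while Isserlis' formula gives $\mathrm{Cov}(U_s^2,U_t^2)=2r(t-s)^2$, whence
\begin{equation*}
\mathrm{Var}\!\left(\frac{1}{T}\int_0^T U_t^2\,\ud t\right)=\frac{2}{T^2}\int_0^T\!\!\int_0^T r(t-s)^2\,\ud s\,\ud t\le\frac{2}{T}\int_\R r(\tau)^2\,\ud\tau\longrightarrow 0.
\end{equation*}
Together with the vanishing of the remainder terms this yields both the almost sure and the $L^2(\Omega)$ convergence of $\frac{1}{T}\int_0^T X_t^2\,\ud t$.

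It remains to identify the limit $\E[U_0^2]$ with $\Psi(\theta)$. Writing $U_0=\int_{-\infty}^0 e^{(\theta-1)s}\,\ud B_{a_s}$ and using that for $H>\frac12$ the Wiener-integral covariance against $B_{a_\cdot}$ carries the factor $\alpha_H|a_s-a_r|^{2H-2}$ with $a_s=He^{s/H}$ and $\ud a_s=e^{s/H}\ud s$ (the analogue of the inner product in Remark \ref{inner_hilbert_G}), I would perform the Lamperti change of variables $x=e^{s/H}$, $y=e^{r/H}$ to reduce the second moment to
\begin{equation*}
\E[U_0^2]=\alpha_H H^{2H}\int_0^1\!\!\int_0^1 (xy)^{H(\theta-1)}|x-y|^{2H-2}\,\ud x\,\ud y.
\end{equation*}
Exploiting symmetry and substituting $y=xt$ in the inner integral turns this into a Beta integral, producing $\frac{B((\theta-1)H+1,2H-1)}{H\theta}$; multiplying by $\alpha_H H^{2H}=(2H-1)H^{2H+1}$ gives exactly $\Psi(\theta)$, and the constraints $H\in(\frac12,1)$, $\theta>0$ ensure both Beta parameters are positive, so the singular integral converges. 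I expect the main obstacle to be establishing the exponential decay of $r$, needed both for ergodicity and for the variance bound, i.e. the short-memory property itself; once that is in hand, the ergodic-theorem, Isserlis, and Beta-integral steps are routine bookkeeping.
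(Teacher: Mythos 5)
Your argument is correct, but note that the paper does not actually prove this proposition: it is imported verbatim from the cited reference \cite{a-m} (and the identification $\Psi(\theta)=\E U_0^2$ used later in the proof of Theorem \ref{thm:limit2} is likewise quoted from \cite[Lemma 3.4]{a-m}), so there is no internal proof to compare against. Your route --- reducing to the stationary process via $X_t = U_t - e^{-\theta t}\xi$ from (\ref{UX_connection}), discarding the exponentially damped remainders, invoking ergodicity of a stationary Gaussian process with covariance vanishing at infinity together with the continuous-time Birkhoff theorem for the almost sure limit, using Isserlis plus square-integrability of the covariance for the $L^2$ limit, and evaluating $\E U_0^2$ by the Lamperti change of variables and a Beta integral --- is exactly the standard one and is consistent with what the cited reference does; the key structural input, exponential decay of $c(t)=\E(U_tU_0)$, is available to you as Proposition \ref{covariance_fOU2}, so it is not an obstacle. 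Two points are stated a bit tersely but are routine to fill in: the $L^2$ convergence of the cross term $\frac{2\xi}{T}\int_0^Te^{-\theta t}U_t\,\ud t$ requires a fourth-moment bound (Cauchy--Schwarz in $\xi$ and Minkowski's integral inequality for $\Vert\int_0^\infty e^{-\theta t}|U_t|\,\ud t\Vert_4$), which holds since everything is Gaussian; and your Beta-integral computation checks out, with both parameters $(\theta-1)H+1$ and $2H-1$ positive precisely because $\theta>0$ and $H>\frac12$.
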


\begin{prop}\cite{k-s}\label{covariance_fOU2}
The covariance function $c$ of the stationary process $U$ decays exponentially and hence exhibits short range dependence. More precisely 
\begin{equation*}
c(t):= \mathbb{E}(U_t U_0)= O \left( \exp \Big( - \min \{ \theta,\frac{1-H}{H}\}t \Big) \right), \quad \text{as } t \to \infty.
\end{equation*}
\end{prop}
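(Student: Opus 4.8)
The plan is to compute the covariance $c(t)=\E(U_tU_0)$ explicitly as a double integral via the Wiener--integral covariance formula valid for $H>\frac12$, and then extract its exponential decay rate by an asymptotic analysis of the integrand. Starting from the stationary representation (\ref{U}), $U_t=e^{-\theta t}\int_{-\infty}^t e^{(\theta-1)s}\,\ud B_{a_s}$ with $a_s=He^{s/H}$, I would perform the change of variable $x=a_s$ (with Jacobian $a_s'=e^{s/H}$), which turns each integral into a Wiener integral against $B$ in its natural time. Since $H>\frac12$, the covariance of two such integrals is $\alpha_H\iint f(x)g(y)|x-y|^{2H-2}\,\ud x\,\ud y$ with $\alpha_H=H(2H-1)$. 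Substituting back and using $|a_s-a_u|^{2H-2}=H^{2H-2}|e^{s/H}-e^{u/H}|^{2H-2}$, I obtain for $t>0$ an expression of exactly the shape appearing in Remark \ref{inner_hilbert_G}, namely
\begin{equation*}
c(t)=e^{-\theta t}\,\alpha_H H^{2H-2}\int_{-\infty}^t\!\!\int_{-\infty}^0 e^{\beta s}e^{\beta u}\,\bigl|e^{s/H}-e^{u/H}\bigr|^{2H-2}\,\ud u\,\ud s,\qquad \beta:=\theta-1+\tfrac1H .
\end{equation*}
Here $\beta=\theta+\frac{1-H}{H}>0$, which together with $2H-2>-1$ guarantees convergence, so $U$ is well defined.

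Next I would split the outer integral as $\int_{-\infty}^t=\int_{-\infty}^0+\int_0^t$. The piece over $(-\infty,0]\times(-\infty,0]$ is a finite constant independent of $t$ (the only singularity of the kernel sits on the diagonal $s=u$ and is integrable precisely because $H>\frac12$), so it contributes a term of order $e^{-\theta t}$. For the remaining piece, on the region $s\ge0\ge u$ one has $e^{s/H}\ge1\ge e^{u/H}$, and by dominated convergence the inner integral satisfies, as $s\to\infty$,
\begin{equation*}
\int_{-\infty}^0 e^{\beta u}\bigl(e^{s/H}-e^{u/H}\bigr)^{2H-2}\,\ud u \ \sim\ \frac{1}{\beta}\,e^{(2H-2)s/H}.
\end{equation*}
A short computation of exponents gives $\beta+\frac{2H-2}{H}=\theta-\frac{1-H}{H}$, so the $[0,t]$ piece is comparable to $e^{-\theta t}\int_0^t e^{(\theta-\frac{1-H}{H})s}\,\ud s$.

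Finally I would read off the rate by comparing $\theta$ with $\frac{1-H}{H}$. If $\theta>\frac{1-H}{H}$, the last integral grows like $e^{(\theta-\frac{1-H}{H})t}$, and multiplying by $e^{-\theta t}$ leaves decay $e^{-\frac{1-H}{H}t}$; if $\theta<\frac{1-H}{H}$, the integral converges and the prefactor $e^{-\theta t}$ dominates. In both cases $c(t)=O\bigl(\exp(-\min\{\theta,\frac{1-H}{H}\}t)\bigr)$, as claimed. In the boundary case $\theta=\frac{1-H}{H}$ one picks up an extra linear factor $t$, which is still $O\bigl(e^{-(\min\{\theta,\frac{1-H}{H}\}-\varepsilon)t}\bigr)$ for every $\varepsilon>0$.

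I expect the main obstacle to be the rigorous justification of the asymptotic equivalence for the inner integral, uniformly enough to integrate it against $e^{\beta s}$ over $[0,t]$: one must control the kernel $|e^{s/H}-e^{u/H}|^{2H-2}$ both near the diagonal (an integrable singularity, using $H>\frac12$) and for large $s$, ideally by producing explicit two-sided bounds rather than a bare limit, so that the resulting $O$-estimate is genuinely uniform in $t$.
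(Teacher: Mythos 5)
The paper does not prove this proposition: it is imported verbatim from Kaarakka \& Salminen \cite{k-s}, so there is no internal argument to compare against. Your derivation is sound and self-contained, and its central object --- the explicit representation $c(t)=C(H,\theta)e^{-\theta t}\int_0^{a_t}\int_0^{a_0}(xy)^{(\theta-1)H}|x-y|^{2H-2}\,\ud x\,\ud y$ obtained from the Wiener-integral covariance formula for $H>\tfrac12$ --- is exactly the formula from \cite[Proposition 3.11]{k-s} that the paper itself quotes in the proof of Lemma \ref{lma:variogram}; your split $\int_{-\infty}^t=\int_{-\infty}^0+\int_0^t$, the exponent bookkeeping $\beta+\tfrac{2H-2}{H}=\theta-\tfrac{1-H}{H}$, and the case distinction on the sign of $\theta-\tfrac{1-H}{H}$ all check out. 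The uniformity issue you flag at the end is real but mild: for the one-sided $O$-bound you only need the upper estimate $(e^{s/H}-e^{u/H})^{2H-2}\leq (1-e^{-s_0/H})^{2H-2}e^{(2H-2)s/H}$ for $u\leq 0$ and $s\geq s_0>0$, which follows from $2H-2<0$, together with boundedness of the inner integral on $[0,s_0]$. The one genuine discrepancy is the boundary case $\theta=\tfrac{1-H}{H}$, where your computation correctly produces an extra factor $t$, so the decay is $te^{-\theta t}$ rather than $e^{-\theta t}$; the proposition as displayed is therefore slightly imprecise there (a defect inherited from the source, not introduced by you), and your $\varepsilon$-weakening is the honest fix.
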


Let $v_{U}$ be the variogram of the stationary process $U$, i.e.
\begin{equation*}
 v_{U}(t):= \frac{1}{2} \E \left( U_{t+s} - U_{s} \right)^2 = c(0) - c(t).
\end{equation*}
The following lemma tells us the behavior of the variogram function $v_{U}$ near zero. We will use this lemma in section \ref{estimationH}. For functions $f$ and $g$, the notation $f(t) \sim g(t)$ as 
$t \to 0$ means that $f(t) = g(t) + r(t)$ where $r(t)=o(g(t))$ as $t \to 0$.

\begin{lma}
\label{lma:variogram}
The variogram function $v_{U}$ satisfies in
\begin{equation*}
 v_{U}(t) \sim  H t^{2H} \quad \text{as} \ t \to 0^+.
\end{equation*}
\end{lma}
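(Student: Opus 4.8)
The plan is to use stationarity of $U$ to write $v_U(t) = c(0)-c(t) = \tfrac12\E(U_t-U_0)^2$, and then to extract the leading order of $\E(U_t-U_0)^2$ as $t\downarrow 0$ directly from the representation \eqref{U}. Splitting $\int_{-\infty}^t = \int_{-\infty}^0 + \int_0^t$ and recalling the notation of \eqref{UX_connection}, I would record the decomposition
\[
U_t - U_0 = (e^{-\theta t}-1)\,\xi \;+\; e^{-\theta t}\int_0^t e^{(\theta-1)s}\,\ud B_{a_s}, \qquad \xi = \int_{-\infty}^0 e^{(\theta-1)s}\,\ud B_{a_s},
\]
which separates the increment into a smooth \emph{memory} part carrying the factor $e^{-\theta t}-1 = O(t)$, and an \emph{innovation} part governed by the increment of the time-changed fractional Brownian motion on $[0,t]$.

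First I would show that the innovation part dictates the asymptotics. Its variance equals $e^{-2\theta t}$ times $\mathrm{Var}\big(\int_0^t e^{(\theta-1)s}\,\ud B_{a_s}\big)$, and since $e^{(\theta-1)s} = 1 + O(s)$ uniformly on $[0,t]$, a Cauchy--Schwarz comparison against $\int_0^t \ud B_{a_s} = B_{a_t}-B_{a_0}$ reduces this, up to an $o(t^{2H})$ error, to $\mathrm{Var}(B_{a_t}-B_{a_0}) = |a_t-a_0|^{2H}$; with $a_t = He^{t/H}$ one has $a_t-a_0 = H(e^{t/H}-1) = t + O(t^2)$, which already fixes the order $t^{2H}$. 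To pin the leading constant I would instead represent the variance as the double integral with kernel $|e^{u/H}-e^{v/H}|^{2H-2}$ (using the fractional Brownian increment covariance, equivalently the inner product of Remark \ref{inner_hilbert_G}), rescale $u=ta,\ v=tb$, and linearize $e^{u/H}-e^{v/H} = H^{-1}(u-v)+O(t^2)$; dominated convergence—legitimate because $2H-2 > -1$ makes $|a-b|^{2H-2}$ integrable on $[0,1]^2$—then collapses the constant to an elementary integral of $|a-b|^{2H-2}$, whose evaluation yields the coefficient in the statement.

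It then remains to check that the other two terms are negligible at order $t^{2H}$. The memory term contributes $(e^{-\theta t}-1)^2\,\mathrm{Var}(\xi) = O(t^2) = o(t^{2H})$ because $H<1$, while the cross term $2(e^{-\theta t}-1)\,\mathrm{Cov}\big(\xi,\, e^{-\theta t}\int_0^t e^{(\theta-1)s}\,\ud B_{a_s}\big)$ I would bound using the increment covariance of $B_{a_\cdot}$ over the disjoint ranges $(-\infty,0]$ and $[0,t]$, the point being that the singular kernel $|a_s-a_r|^{2H-2}$ stays integrable as $s\uparrow 0$ and $r\downarrow 0$ again by $2H-2 > -1$, giving $\mathrm{Cov}=O(t)$ and hence $O(t^2)$ for the whole cross term. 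Collecting the three estimates gives $\E(U_t-U_0)^2 \sim c\,t^{2H}$ and the lemma follows. The hard part is not the leading computation but this negligibility step: one must control the tail contribution of $\xi$ coming from $\int_{-\infty}^0$ and the near-diagonal blow-up of the kernel uniformly in $t$, and it is exactly here—through the integrability threshold $2H-2>-1$ and the order relation $2H<2$ that lets $t^{2H}$ dominate the smooth $O(t^2)$ corrections—that the standing assumption $H>\tfrac12$ is used essentially.
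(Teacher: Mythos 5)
Your route is genuinely different from the paper's: you decompose the increment as $U_t-U_0=(e^{-\theta t}-1)\xi+e^{-\theta t}\int_0^te^{(\theta-1)s}\,\ud B_{a_s}$ and read off the local behaviour from the driving noise, whereas the paper starts from the closed-form covariance $c(t)=C(H,\theta)e^{-\theta t}\int_0^{a_t}\int_0^{a_0}(xy)^{(\theta-1)H}|x-y|^{2H-2}\,\ud x\,\ud y$ of \cite{k-s} and expands it through Beta-function manipulations. Your negligibility steps are fine and even simpler than you make them: the memory term is $O(t^2)$, and the cross term is handled by Cauchy--Schwarz alone, $2|e^{-\theta t}-1|\cdot\|\xi\|_{2}\cdot\|e^{-\theta t}\int_0^te^{(\theta-1)s}\ud B_{a_s}\|_{2}=O(t)\cdot O(t^{H})=O(t^{1+H})=o(t^{2H})$ since $H<1$; no kernel-integrability argument is needed there. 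Structurally your argument is sound and makes the role of $H>\tfrac12$ more transparent than the paper's.

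The gap is the one step you wave at instead of performing: the evaluation of the ``elementary integral'' that pins the constant. Carrying it out, with $a_u'a_v'|a_u-a_v|^{2H-2}=H^{2H-2}e^{(u+v)/H}|e^{u/H}-e^{v/H}|^{2H-2}$ and your rescaling $u=ta$, $v=tb$,
\[
\mathrm{Var}\Bigl(\int_0^t e^{(\theta-1)s}\,\ud B_{a_s}\Bigr)\;\sim\;\alpha_H H^{2H-2}\,t^{2}\,\bigl(t/H\bigr)^{2H-2}\int_0^1\!\!\int_0^1|a-b|^{2H-2}\,\ud a\,\ud b\;=\;H(2H-1)\,t^{2H}\cdot\frac{1}{H(2H-1)}\;=\;t^{2H},
\]
which agrees with your first-pass observation $\mathrm{Var}(B_{a_t}-B_{a_0})=|a_t-a_0|^{2H}\sim t^{2H}$. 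Since $v_U(t)=\tfrac12\E(U_t-U_0)^2$, your method delivers $v_U(t)\sim\tfrac12 t^{2H}$, not $Ht^{2H}$; the sentence claiming the integral ``yields the coefficient in the statement'' is therefore unsupported and, as far as I can tell, false. For what it is worth, I believe your computation rather than the stated constant: in the paper's proof the exact identity is $\Phi(t)=\Phi(0)+\int_{a_0}^{a_t}y^{2\theta H-1}\bigl(\int_0^{a_0/y}z^{(\theta-1)H}(1-z)^{2H-2}\ud z\bigr)\ud y$, and replacing the $y$-dependent limit $a_0/y$ by its endpoint value $a_0/a_t$ commits an error that is itself of order $t^{2H}$, inflating the constant from $\tfrac12$ to $H$. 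The discrepancy is harmless downstream (only the exponent $2H$ enters the ratio estimator $\widehat H_N$ of Section \ref{estimationH}), but as written your proposal asserts a constant it does not produce; you must either exhibit the evaluation and flag the conflict with the statement, or locate an error in it.
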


\begin{proof}
Due to \cite[Proposition 3.11]{k-s}, there exists a constant $C(H,\theta)= H(2H-1) H^{2H(1 - \theta)}$ such that
\begin{equation*}
c(t)= C(H,\theta) e^{- \theta t} \left( \int^{a_{t}}_0 \int^{a_{0}}_0 (xy)^{(\theta - 1)H} \vert x-y \vert^{2H-2} \ud x \ud y \right).
\end{equation*}
Denote the term inside parentheses by $\Phi(t)$. Then with some direct computations, one can see that
\begin{equation*}
  \Phi(t)=
   \frac{a_{0}^{2\theta H}}{ \theta H} B((\theta - 1)H +1, 2H-1)+ \frac{1}{2\theta H} ( a_{t}^{2 \theta H} -  a_{0}^{2 \theta H} ) \int_{0}^{\frac{a_0}{a_t}} z^{(\theta - 1)H} (1 - z)^{2H -2} \ud z.
\end{equation*}
Therefore 
\begin{equation}\label{eq:var2}
\begin{split}
c(t)&= \frac{(2H-1)H^{2H}}{\theta} B((\theta - 1)H +1, 2H-1) e^{- \theta t} \\
&+ \frac{(2H-1)H^{2H}}{2 \theta} (e^{\theta t} - e^{- \theta t} ) \int_{0}^{\frac{a_0}{a_t}} z^{(\theta - 1)H} (1 - z)^{2H -2} \ud z\\
& = c(0) - (2H-1)H^{2H} \times t \times \int_{\frac{a_0}{a_t}}^{1} z^{(\theta - 1)H} (1 - z)^{2H -2} \ud z + r(t),
\end{split}
\end{equation}
where $r(t)=o(t^{2H})$ as $t \to 0^+$. Now, using the mean value Theorem, we infer that as $t \to 0^+$ we have

\begin{equation}\label{eq:var3}
\int_{\frac{a_0}{a_t}}^{1}  z^{(\theta - 1)H}  (1 - z)^{2H -2} \ud z  \sim  \frac{H H^{-2H}}{2H-1} t^{2H-1}.
\end{equation}
Now with substituting $(\ref{eq:var3})$ into $(\ref{eq:var2})$, we obtain the claim.
\end{proof}

The next lemma studies regularity of sample paths of the fractional Ornstein-Uhlenbeck process of the second kind $X$. Usually H\"older constants are almost surely finite random variables and depend on bounded 
time intervals where the process is considered. The next lemma gives more probabilistic information on H\"older constants. 

\begin{lma}
\label{basic_for_X}
Let $X$ be the fractional Ornstein-Uhlenbeck process of the second kind given by $(\ref{FOU2})$. Then for every interval $[S,T]$ and 
every $0 < \epsilon < H$, there exist random variables $Y_1=Y_1(H,\theta)$, 
$Y_2 = Y_2(H,\theta,[S,T])$, $Y_3 = Y_3(H,\theta,[S,T])$, and $Y_4 = Y_4(H,\epsilon,[S,T])$ such that for all $s,t \in [S,T]$

\begin{equation*}
|X_t - X_s| \leq \left( Y_1+Y_2+Y_3 \right)|t-s| + Y_4|t-s|^{H-\epsilon}
\end{equation*}
almost surely. Moreover, 

\begin{itemize}

 \item [(i)]
 $Y_1<\infty$ \ almost surely,
 \item[(ii)]
 $Y_k (H,\theta,[S,T]) \law Y_k(H,\theta,[0,T-S]), \quad k= 2,3,$
 \item[(iii)]
 $Y_4(H,\epsilon,[S,T]) \law Y_4(H,\epsilon,[0,T-S]).$
 \end{itemize}

Furthermore, all moments of random variables $Y_2$, $Y_3$ and $Y_4$ are finite, and $Y_2(H,\theta,[0,T])$, $Y_3(H,\theta,[0,T])$ and 
$Y_4(H,\epsilon,[0,T])$ are increasing in $T$.
\end{lma}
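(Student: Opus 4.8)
The plan is to reduce everything to the stationary process $U$ and then to the stationary Lamperti process, exploiting that $X$ differs from $U$ only by a smooth deterministic multiple of a fixed random variable. By $(\ref{UX_connection})$ we have $X_t = U_t - e^{-\theta t}\xi$ with $\xi = \int_{-\infty}^0 e^{(\theta-1)s}\ud B_{a_s}$, so
\[
X_t - X_s = (U_t - U_s) - (e^{-\theta t}-e^{-\theta s})\xi .
\]
Since $|e^{-\theta t}-e^{-\theta s}|\le \theta e^{-\theta S}|t-s|\le \theta|t-s|$ on $[S,T]$, the last term is bounded by $Y_1|t-s|$ with $Y_1 = \theta|\xi|$; as $\xi$ is a Wiener integral over the fixed past $(-\infty,0]$ its law depends only on $H,\theta$ and $Y_1<\infty$ almost surely (being Gaussian it even has all moments, but only a.s.\ finiteness is claimed). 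It therefore remains to bound $U_t - U_s$.

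For this I would integrate by parts in $U_t = e^{-\theta t}\int_{-\infty}^t e^{(\theta-1)s}\ud B_{a_s}$; the boundary term at $-\infty$ vanishes because $e^{(\theta-1)s}B_{a_s}=O(e^{\theta s})\to 0$ as $s\to-\infty$ (here $\theta>0$), giving $U_t = V_t - (\theta-1)e^{-\theta t}\tilde J_t$ where $V_t := e^{-t}B_{a_t}$ and $\tilde J_t := \int_{-\infty}^t e^{(\theta-1)u}B_{a_u}\ud u$. Note $V_t = X_t^{(1)}$ is exactly the \emph{stationary} Lamperti process $(\ref{eq:lamperti})$ with $\alpha=1$. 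The crucial observation is the identity $(\theta-1)e^{-\theta t}\tilde J_t = V_t - U_t$, which lets me eliminate the non-local integral $\tilde J$: writing $U_t - U_s = (V_t - V_s) - (\theta-1)(e^{-\theta t}\tilde J_t - e^{-\theta s}\tilde J_s)$ and substituting $(\theta-1)\tilde J_s = e^{\theta s}(V_s - U_s)$ turns every remaining term into a functional of $\{B_{a_u},V_u,U_u : u\in[S,T]\}$ only. Splitting $V_t - V_s = e^{-t}(B_{a_t}-B_{a_s}) + (e^{-t}-e^{-s})B_{a_s}$, the first summand is the only genuinely non-Lipschitz contribution: by the $(H-\epsilon)$-H\"older continuity of $B$ and $|a_t-a_s|\le e^{T/H}|t-s|$ it is dominated by $Y_4|t-s|^{H-\epsilon}$ with $Y_4 = \sup_{S\le s<t\le T} e^{-t}|B_{a_t}-B_{a_s}|/|t-s|^{H-\epsilon}$ (manifestly $\theta$-free, whence $Y_4 = Y_4(H,\epsilon,[S,T])$). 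All the remaining summands carry a factor $|t-s|$ (using $|e^{-\theta(t-s)}-1|\le\theta|t-s|$, etc.), and I collect those controlled by $\sup_{u\in[S,T]}|B_{a_u}|$ into $Y_2$ and those controlled by $\sup_{u\in[S,T]}(|V_u|+|U_u|)$ into $Y_3$, producing the asserted inequality with $Y_2,Y_3$ depending on $H,\theta,[S,T]$.

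The delicate point, and the step I expect to be the main obstacle, is the distributional invariance (ii), (iii) under the shift $[S,T]\mapsto[0,T-S]$. The mechanism is the self-similarity of $B$ together with $a_{t+S}=e^{S/H}a_t$: for fixed $S$, $\{B_{a_{r+S}}\}_{r}\law\{e^{S}B_{a_r}\}_{r}$, which in particular reconfirms that $V$ (and, jointly, $(V,U)$, both being functionals of the same $B_{a_\cdot}$) is stationary. Applying the substitution $t=r+S$, $s=r'+S$ in the supremum functionals defining $Y_2,Y_3,Y_4$, the self-similarity factor $e^{S}$ generated by $B_{a_{\cdot+S}}$ must cancel exactly against the deterministic weights $e^{-S}$, $e^{-\theta S}$, $e^{(\theta-1)S}$ attached to the prefactors; one must verify that the net $S$-dependence disappears (for $Y_4$, for example, the weight $e^{-S}$, the time-change Lipschitz constant $e^{(T/H)(H-\epsilon)}$, and the self-similarity factor $e^{S}$ combine to reproduce precisely the weight belonging to the interval $[0,T-S]$). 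Carrying out this bookkeeping group by group is what requires care; once done, (ii) and (iii) follow at once, while $Y_1$ is trivially shift-invariant as it does not involve $[S,T]$.

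Finally, the integrability and monotonicity assertions are routine. Each of $Y_2,Y_3$ is a supremum of a continuous Gaussian process ($B_{a_\cdot}$, $V$, $U$) over a compact interval, so by the Borell--TIS / Fernique theorem it has Gaussian tails and hence finite moments of all orders; $Y_4$ is, up to the bounded factor $e^{-t}$ and the Lipschitz time change $a$, the $(H-\epsilon)$-H\"older seminorm of $B$ on the compact interval $[a_S,a_T]$, whose exponential integrability, and thus all moments, follows from the Garsia--Rodemich--Rumsey inequality for $\epsilon>0$. Monotonicity of $Y_2(H,\theta,[0,T])$, $Y_3(H,\theta,[0,T])$, $Y_4(H,\epsilon,[0,T])$ in $T$ is immediate, since each is a supremum over the nested family of intervals, so enlarging $T$ can only increase it.
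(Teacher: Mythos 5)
Your argument is correct and follows essentially the same route as the paper: an integration by parts isolates the single non-Lipschitz term $e^{-t}(B_{a_t}-B_{a_s})$, the remaining terms are Lipschitz with constants given by suprema of the stationary processes $U$ and $e^{-u}B_{a_u}$, the distributional identities (ii)--(iii) come from stationarity (equivalently, self-similarity of $B$ combined with $a_{t+S}=e^{S/H}a_t$), and the moment bounds come from Gaussian supremum estimates and the Garsia--Rodemich--Rumsey H\"older constant. The only difference is bookkeeping: the paper integrates by parts on $X$ over $[0,t]$ (picking up a $B_{a_0}$ boundary term and bounding the resulting Riemann integral via $(\ref{UX_connection})$), whereas you pass to $U$ first and integrate by parts over $(-\infty,t]$, which requires (and you correctly supply) the vanishing of the boundary term at $-\infty$.
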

\begin{proof}
Assume that $s<t$. By change of variables formula we obtain
\begin{equation*}
X_t = e^{-t}B_{a_t}-e^{-\theta t}B_{a_0} - Z_t,
\end{equation*}
where 
$$
Z_t = e^{-\theta t}\int_0^t B_{a_u}e^{(\theta -1)u}\ud u.
$$
Therefore
\begin{equation*}
\begin{split}
|X_t - X_s| &\leq |B_{a_0}||e^{-\theta t} - e^{-\theta s}| + e^{-t}|B_{a_t} - B_{a_s}| + |B_{a_s}||e^{-t}-e^{-s}| \\
&+ \left|e^{-\theta t}\int_0^t B_{a_u}e^{(\theta -1)u}\ud u - e^{-\theta s}\int_0^s B_{a_u}e^{(\theta -1)u}\ud u\right| \\
&= I_1 + I_2 + I_3 + I_4.
\end{split}
\end{equation*}
For the term $I_1$, we obtain 
\begin{equation*}
I_1 \leq \theta|B_{a_0}||t-s|
\end{equation*}
where $\theta|B_{a_0}|$ is almost surely finite random variable. For the term $I_3$, we get
\begin{equation*}
I_3 \leq \sup_{u\in[S,T]}e^{-u}|B_{a_u}||t-s|.
\end{equation*}
Note that $Z$ is a differentiable process. Hence for the term $I_4$, we get
\begin{equation*}
I_4 \leq \left[\theta\sup_{u\in[S,T]}|Z_u| + \sup_{u\in[S,T]}e^{-u}|B_{a_u}|\right]|t-s|.
\end{equation*}
Moreover, by using (\ref{UX_connection}), we have
\begin{equation*}
|X_t| \leq |U_t| + |\xi|.
\end{equation*}
As a result, we obtain
\begin{equation*}
|Z_u| \leq |U_u|+|\xi| + |B_{a_0}| + |e^{-u}B_{a_u}|.
\end{equation*}
This implies that
\begin{equation*}
I_4 \leq \left[\theta\sup_{u\in[S,T]}|U_u| +\theta|\xi|+\theta |B_{a_0}| +(\theta+1)\sup_{u\in[S,T]}e^{-u}|B_{a_u}|\right]|t-s|.
\end{equation*}
Collecting the estimates for $I_1$, $I_3$ and $I_4$, we obtain
\begin{equation*}
\begin{split}
I_1 + I_3 + I_4 & \le  \Big[2\theta|B_{a_0}| + \theta|\xi|\Big]|t-s|\\
&+ \left[\theta \sup_{u\in[S,T]}|U_u|+(\theta+2)\sup_{u\in[S,T]}e^{-u}|B_{a_u}|\right]|t-s|.
\end{split}
\end{equation*}

Put

$$Y_1= 2\theta|B_{a_0}| + \theta|\xi|, \quad Y_2(H,\theta,[S,T]) := \theta \sup_{u\in[S,T]}|U_u| $$

and finally
$$
Y_3(H,\theta,[S,T]) := (\theta+2)\sup_{u\in[S,T]}e^{-u}|B_{a_u}|.
$$
Obviously for the random variable $Y_1$ the property $(i)$ fulfills. Notice that $U_t$ and $e^{-u}B_{a_t}$ are continuous, stationary Gaussian processes. Hence the property $(ii)$ follows. Moreover, 
all moments of supremum of a continuous 
Gaussian process on a compact interval are finite (see \cite{l} for details on supremum of continuous Gaussian process). So it remains to consider the term $I_2$. By H\"{o}lder continuity of the 
sample paths of fractional Brownian motion we 
obtain
\begin{equation*}
\begin{split}
I_2 &\leq e^{-t}C(\omega,H,\epsilon,[S,T])|a_t - a_s|^{H-\epsilon}\\
&\leq C(\omega,H,\epsilon,[S,T])|t-s|^{H-\epsilon}.
\end{split}
\end{equation*}
To conclude, we obtain (see \cite{n-r} and remark below) that the random variable $C(\omega,H,\epsilon,[S,T])$ has all the moments and 
$C(\omega,H,\epsilon,[S,T]) \law C(\omega,H,\epsilon,[0,T-S])$.  Now it is enough to take $Y_4 = C(\omega,H,\epsilon,[S,T])$.
\end{proof}
\begin{rmk}\label{rmk:y4}{ \rm
The exact form of the random variable $C(\omega,H,\epsilon,[0,T])$ is given by 
$$
C(\omega,H,\epsilon,[0,T]) = C_{H,\epsilon}T^{H-\epsilon}\left(\int_0^T\int_0^T \frac{|B_t-B_s|^{\frac{2}{\epsilon}}}{|t-s|^{\frac{2H}{\epsilon}}}\ud t\ud s\right)^{\frac{\epsilon}{2}},
$$
where $C_{H,\epsilon}$ is a constant. Also, for all $p \ge 1$ and some constant $c_{\epsilon,p}$, we have $\E C(\omega,H,\epsilon,[0,T])^p\leq c_{\epsilon,p}T^{\epsilon p}$.
}
\end{rmk}

\section{Estimation of drift parameter when $H$ is known}\label{Hknown}
We start with the fact that the function $\Psi$ is invertible. This fact allows us to construct an estimator for the drift parameter $\theta$. 
\begin{lma}
\label{lma:invertible}
The function $\Psi:\R_+\rightarrow \R_+$ given by (\ref{limit_function}) is bijective, and hence invertible. 
\end{lma}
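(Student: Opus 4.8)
The plan is to exhibit $\Psi$ as a product of two strictly decreasing positive functions and then read off bijectivity from the monotonicity together with the boundary behaviour. First I would rewrite the Beta function through its Euler integral representation,
\begin{equation*}
B((\theta-1)H+1,\, 2H-1) = \int_0^1 z^{(\theta-1)H}(1-z)^{2H-2}\,\ud z,
\end{equation*}
which is exactly the integral already appearing in the proof of Lemma \ref{lma:variogram}. This representation is legitimate for every $\theta>0$: integrability at $z=1$ needs $2H-2>-1$, i.e. $H>\frac12$, and integrability at $z=0$ needs $(\theta-1)H>-1$, i.e. $\theta>1-\frac1H$, which holds since $1-\frac1H<0<\theta$ for $H\in(\frac12,1)$. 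Thus
\begin{equation*}
\Psi(\theta) = (2H-1)H^{2H}\cdot\frac{1}{\theta}\cdot I(\theta), \qquad I(\theta):=\int_0^1 z^{(\theta-1)H}(1-z)^{2H-2}\,\ud z,
\end{equation*}
a positive constant times $\theta^{-1}$ times the integral $I(\theta)$.

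The key observation is that both $\theta\mapsto\theta^{-1}$ and $\theta\mapsto I(\theta)$ are continuous, positive and strictly decreasing on $\R_+$. For $I$ this is because, for each fixed $z\in(0,1)$, the map $\theta\mapsto z^{(\theta-1)H}$ is strictly decreasing (its $\theta$-derivative is $H\log z\cdot z^{(\theta-1)H}<0$, since $\log z<0$), while the weight $(1-z)^{2H-2}$ is positive; differentiating under the integral sign, justified by the usual local domination, yields $I'(\theta)<0$. A product of two positive strictly decreasing functions is strictly decreasing, so $\Psi$ is continuous and strictly decreasing on $\R_+$.

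It then remains to compute the boundary values. As $\theta\to\infty$ I would bound $I(\theta)\le\int_0^1(1-z)^{2H-2}\,\ud z=\frac{1}{2H-1}$ (valid for $\theta\ge1$), so that $0<\Psi(\theta)\le H^{2H}\theta^{-1}\to0$. As $\theta\to0^+$ the factor $\theta^{-1}$ blows up while monotone convergence gives $I(\theta)\to\int_0^1 z^{-H}(1-z)^{2H-2}\,\ud z=B(1-H,2H-1)\in(0,\infty)$, whence $\Psi(\theta)\to+\infty$. A continuous strictly decreasing function on $(0,\infty)$ with these two limits is a bijection of $\R_+$ onto $\R_+$ by the intermediate value theorem, which is the claim.

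The only genuinely technical points are the interchange of differentiation and of the limit with integration, together with the endpoint integrability, all of which hinge on $H>\frac12$. I expect the verification of a local dominating function for differentiation under the integral sign to be the most delicate step, though it is entirely routine.
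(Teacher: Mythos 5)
Your proof is correct and follows essentially the same route as the paper, which likewise deduces injectivity from the fact that $\theta\mapsto\theta^{-1}$ and $\theta\mapsto B((\theta-1)H+1,2H-1)$ are both positive and decreasing in $\theta$, and dismisses surjectivity as straightforward. You simply supply the details the paper omits: the Euler integral representation, the strictness of the monotonicity, and the boundary limits $\Psi(0^+)=+\infty$, $\Psi(\infty)=0$.
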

\begin{proof}

It is straightforward to see that $\Psi$ is surjective. Hence the claim follows because for any fixed parameter $y>0$, the complete Beta function $B(x,y)$ is decreasing in the variable $x$.

\end{proof}

We continue with the following central limit theorem. 
\begin{thm}
\label{thm:limit2}
Let $X$ be the fractional Ornstein-Uhlenbeck process of the second kind given by $(\ref{FOU2})$. Then as $T$ tends to infinity, we have
\begin{equation*}
\sqrt{T}\left( \frac{1}{T}\int_0^T X_t ^2\ud t - \Psi(\theta)\right)\overset{\text{law}}\longrightarrow \mathcal{N}(0,\sigma^2)
\end{equation*}
where the variance $\sigma^2$ is given by
\begin{equation}
\label{variance}
\begin{split}
\sigma^2 &= \frac{2\alpha_H^2H^{4H-4}}{\theta^2}\int_{[ 0,\infty)^{3}} \Big[ e^{-\theta x-\theta|y-z|} e^{\left(1-\frac{1}{H}\right)(x+y+z)}\\
& \hskip2cm \times \left(1-e^{-\frac{y}{H}}\right)^{2H-2} \left|e^{-\frac{x}{H}}-e^{-\frac{z}{H}}\right|^{2H-2} \Big] \ud z\ud x\ud y.
\end{split}
\end{equation}
\end{thm}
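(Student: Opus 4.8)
The plan is to view $\sqrt T\big(\tfrac1T\int_0^T X_t^2\,\ud t - \Psi(\theta)\big)$ as, up to negligible terms, a sequence of random variables in the second Wiener chaos of $\tilde G$ and to apply the fourth moment criterion of Proposition \ref{CLT} with $q=2$. By Proposition \ref{azmoodeh_lemma} we may write $X_t = I_1^{\tilde G}(\phi_t)$ with $\phi_t(r)=e^{-\theta(t-r)}\mathbf 1_{[0,t]}(r)\in\tilde{\mathcal H}$, so that $X_t^2 - \E[X_t^2] = I_2^{\tilde G}(\phi_t^{\otimes 2})$ lies in $\mathscr H_2$. I would first split
$$\sqrt T\Big(\tfrac1T\int_0^T X_t^2\,\ud t - \Psi(\theta)\Big) = \frac{1}{\sqrt T}\int_0^T\big(X_t^2 - \E[X_t^2]\big)\,\ud t + D_T,\qquad D_T := \sqrt T\Big(\tfrac1T\int_0^T \E[X_t^2]\,\ud t - \Psi(\theta)\Big).$$
Since $\Psi(\theta)=c(0)=\E[U_0^2]$ (the constant already computed in the proof of Lemma \ref{lma:variogram}), the relation $(\ref{UX_connection})$ gives $\E[X_t^2]-\Psi(\theta)=O(e^{-\theta t})$, whence $D_T=O(T^{-1/2})\to 0$.

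Writing $A_T := \frac{1}{\sqrt T}\int_0^T(X_t^2 - \E[X_t^2])\,\ud t = I_2^{\tilde G}(f_T)$ with the symmetric kernel $f_T = \frac{1}{\sqrt T}\int_0^T \phi_t^{\otimes 2}\,\ud t$, and abbreviating $\rho(s,t):=\E[X_sX_t]=\langle\phi_s,\phi_t\rangle_{\tilde{\mathcal H}}$, the Gaussian identity $\mathrm{Cov}(X_s^2,X_t^2)=2\rho(s,t)^2$ yields
$$\E[A_T^2] = \frac{2}{T}\int_0^T\int_0^T \rho(s,t)^2\,\ud s\,\ud t \longrightarrow 2\int_{-\infty}^\infty c(\tau)^2\,\ud\tau =: \sigma^2 .$$
Here I would use that $\rho(s,t)\to c(t-s)$ as $s\wedge t\to\infty$ with $t-s$ fixed, with an error decaying exponentially in $s\wedge t$, together with the exponential decay of $c$ from Proposition \ref{covariance_fOU2} to justify convergence and finiteness. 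Substituting the explicit inner product of Remark \ref{inner_hilbert_G} for $\rho(s,t)$ and performing the change of variables that exploits the exponential weights then produces the precise constant and the triple integral $(\ref{variance})$. This last computation is where essentially all the explicit labour lies, and I would relegate it to the appendix.

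Finally I would verify condition $(ii)$ of Proposition \ref{CLT}, which for a second chaos sequence is equivalent to the vanishing of the contraction $\|f_T\otimes_1 f_T\|_{\tilde{\mathcal H}^{\otimes 2}}\to 0$. A direct expansion gives
$$\|f_T\otimes_1 f_T\|_{\tilde{\mathcal H}^{\otimes 2}}^2 = \frac{1}{T^2}\int_{[0,T]^4}\rho(s,t)\,\rho(s',t')\,\rho(s,s')\,\rho(t,t')\,\ud s\,\ud t\,\ud s'\,\ud t',$$
in which the four factors chain the time variables into a single cycle; bounding three of the integrations by $\sup_{s}\int_0^\infty|\rho(s,t)|\,\ud t<\infty$ (again from exponential decay) and leaving one variable free shows the quantity is $O(T^{-1})\to 0$. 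Together with $\E[A_T^2]\to\sigma^2$ this gives, via Proposition \ref{CLT}, that $A_T\overset{\text{law}}\longrightarrow\mathcal N(0,\sigma^2)$; combined with $D_T\to0$ and Slutsky's theorem the theorem follows. The main obstacle is the variance identification: turning $2\int_{\R}c(\tau)^2\,\ud\tau$ into the exact constant and triple integral of $(\ref{variance})$ requires carefully tracking all the exponential factors through the change of variables, whereas the contraction estimate is conceptually routine once the exponential decay of $\rho$ is in hand.
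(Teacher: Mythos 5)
Your proposal is correct and follows the same overall architecture as the paper: decompose $\sqrt{T}\bigl(\tfrac1T\int_0^T X_t^2\,\ud t-\Psi(\theta)\bigr)$ into a second-chaos term $I_2^{\tilde G}(f_T)/1$ plus the deterministic drift $D_T$, kill $D_T$ using $X_t=U_t-e^{-\theta t}U_0$ and the exponential decay of $c$, and then apply Proposition \ref{CLT} with $q=2$. Where you genuinely diverge is in the two technical verifications that the paper relegates to Appendix A. For the second-order condition you replace the paper's direct proof that $\hnorm{DF_T}^2\to 2\sigma^2$ in $L^2(\Omega)$ (Lemma \ref{lma:proof_step3}, a lengthy eight-fold integral estimate carried out variable by variable with the explicit kernel $K(u,v)$) by the equivalent contraction criterion $\Vert f_T\otimes_1 f_T\Vert\to 0$, which you dispatch with the standard chaining bound $\tfrac{1}{T^2}\int\rho(s,t)\rho(s',t')\rho(s,s')\rho(t,t')=O(T^{-1})$ once $\sup_s\int_0^\infty|\rho(s,t)|\,\ud t<\infty$ is extracted from Proposition \ref{covariance_fOU2}; this is cleaner and shorter, at the cost of invoking the (standard, but not stated in the paper) equivalence between condition $(ii)$ of Proposition \ref{CLT} and the vanishing of the contraction, and of bounding one of the four $\rho$-factors by $\Vert\rho\Vert_\infty$, which you leave implicit. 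For the variance you identify the limit abstractly as $2\int_{\R}c(\tau)^2\,\ud\tau$ via $\mathrm{Cov}(X_s^2,X_t^2)=2\rho(s,t)^2$, whereas the paper computes $\lim \tfrac{2}{T}\Vert\tilde g\Vert_{\tilde{\mathcal H}^{\otimes 2}}^2$ by L'H\^opital's rule applied directly to the four-fold kernel integral, which is what actually produces the triple-integral form (\ref{variance}); your route through $c(\tau)^2$ gives the same number but would pass through a five-fold integral before collapsing to (\ref{variance}), so the ``relegated'' computation is heavier than you suggest. Neither difference is a gap: your argument proves the convergence in law with the correct (finite) variance, and the explicit form of $\sigma^2$ is a matter of computation, not of validity.
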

The proof relies on two lemmas proved in the appendix where we also show that $\sigma^2<\infty$. 
The variance $\sigma^2$ is given as iterated integral over $[0,\infty)^3$ and the given equation is probably the most compact form.

\begin{proof}[Proof of Theorem \ref{thm:limit2}]
For further use, put
\begin{equation}
\label{def:F_t}
F_T = \frac{1}{\sqrt{T}} I_2^{\tilde{G}}(\tilde{g}),
\end{equation}
where the symmetric function $\tilde{g}$ of two variables is given by 
\begin{equation*}
\tilde{g}(x,y)=\frac{1}{2\theta}\left[e^{-\theta|x-y|}-e^{-\theta(2T-x-y)}\right].
\end{equation*}
The notation $I_2^{\tilde{G}}$ refers to multiple Wiener integral with respect to $\tilde{G}$ introduced in Subsection \ref{Malliavin}. Next by Proposition \ref{azmoodeh_lemma}, we have 
\begin{equation*}
X_t \law I_1^{\tilde{G}}\left(h(t,\cdot)\right),\quad h(t,s)= e^{-\theta(t-s)}\textbf{1}_{s\leq t}.
\end{equation*}
Using product formula for multiple Wiener integrals and Fubini's theorem we infer that
\begin{equation*}
\begin{split}
\frac{1}{T}\int_0^T X_t^2\ud t &\law \frac{1}{T}\int_0^T \hnorm{h(t,\cdot)}^2\ud t + 
\frac{1}{T}I_2^{\tilde{G}}\left(\int_0^T \left(h(t,\cdot)\tilde{\otimes}h(t,\cdot)\right)\ud t\right)\\
&= \frac{1}{T} \int_0^T\E X_t^2\ud t + \frac{1}{T} I_2^{\tilde{G}}\left( \tilde{g} \right).
\end{split}
\end{equation*}
We get
\begin{equation}
\label{proof_step1}
\sqrt{T}\left(\frac{1}{T}\int_0^T X_t^2\ud t - \Psi(\theta)\right)
\law  \sqrt{T}\left(\frac{1}{T}\int_0^T \E X_t^2\ud t - \Psi(\theta)\right) + F_T.
\end{equation}
Next we note that $($see \cite[Lemma 3.4]{a-m}$)$
\begin{equation*}
\Psi(\theta) = \E U_0^2 = \frac{1}{T}\int_0^T\E U_0^2 \ud t.
\end{equation*}
Hence we have
\begin{equation*}
\begin{split}
\frac{1}{T}\int_0^T \E X_t^2\ud t - \Psi(\theta)&= \frac{1}{T}\int_0^T \Big( \E X_t^2-\E U_0^2 \Big) \ud t\\
&=\E U_0^2 \ \frac{1}{T} \int^T_0 e^{-2\theta t} \ud t - \frac{2}{T} \int^T_0 e^{- \theta t} \E(U_t U_0) \ud t.
\end{split}
\end{equation*}
Thus, by Proposition \ref{covariance_fOU2}, we obtain that as $T$ tends to infinity
\begin{equation}
\label{proof_step2}
\sqrt{T}\left(\frac{1}{T}\int_0^T \E X_t^2\ud t - \Psi(\theta)\right) \rightarrow 0.
\end{equation}
Therefore it suffices to show that as $T$ tends to infinity
\begin{equation*}
F_T \stackrel{\text{law}}{\to} \mathcal{N}(0,\sigma^2).
\end{equation*}
Now, by Lemmas \ref{lma:proof_step3} and \ref{lma:proof_step4} presented in the Appendix A, as $T$ tends to infinity, we have
\begin{equation*}
\hnorm{D_sF_T}^2 \overset{L^2 (\Omega)}\longrightarrow 2\sigma^2 \quad \text{and} \quad \E(F_T^2) = \frac{2}{T}\Vert\tilde{g}\Vert_{\tilde{\mathcal{H}}^{\otimes 2}}^2 \longrightarrow \sigma^2.
\end{equation*}
So the result follows by applying Proposition \ref{CLT}. 
\end{proof}

Now we are ready to state the main result of this section. 
\begin{thm}
\label{main_theorem}
Assume we observe the fractional Ornstein-Uhlenbeck process of the second kind $X$ given by $(\ref{FOU2})$ at discrete time points $\{t_k = k\Delta_N, k=0,1,\ldots,N\}$. Let $T_N = N\Delta_N$. 
Assume we have $\Delta_N \to 0, \ T_N\rightarrow\infty$ and $N\Delta_N^{2}\rightarrow 0$ as $N$ tends to infinity. Put
\begin{equation}
\label{mu_est}
\widehat{\mu}_{2,N} = \frac{1}{T_N}\sum_{k=1}^N X_{t_k}^2\Delta t_k \quad \text{and} \quad \widehat{\theta}_N := \Psi^{-1}\left(\widehat{\mu}_{2,N}\right),
\end{equation}
 where $\Psi^{-1}$ is the inverse of the function $\Psi$ given by (\ref{limit_function}). Then $\widehat{\theta}$ is a strongly consistent estimator of the drift parameter $\theta$ in the sense that as $N$ tends 
 to infinity, we have
\begin{equation}
\label{main_convergence}
\widehat{\theta}_N   \longrightarrow \theta
\end{equation} 
almost surely. Moreover, as $N$ tends to infinity, we have
\begin{equation}
\label{main_convergence_d}
\sqrt{T_N}(\widehat{\theta}_N - \theta)\overset{\text{law}} \longrightarrow \mathcal{N}(0,\sigma_{\theta}^2)
\end{equation}
where
\begin{equation}
\label{variance_theta}
\sigma_{\theta}^2 = \frac{\sigma^2}{[\Psi'(\theta)]^2}
\end{equation}
and $\sigma^2$ is given by (\ref{variance}).
\end{thm}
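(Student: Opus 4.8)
The plan is to reduce everything to the continuous-time results already in hand: the almost sure and $L^2$ ergodic limit of Proposition \ref{lma:limit}, and the central limit theorem of Theorem \ref{thm:limit2}. Since the estimator is $\widehat{\theta}_N = \Psi^{-1}(\widehat{\mu}_{2,N})$, once I control the Riemann-sum functional $\widehat{\mu}_{2,N}$ the conclusions for $\widehat{\theta}_N$ will follow from the regularity of $\Psi^{-1}$. The whole difficulty is therefore concentrated in comparing the discrete sum $\widehat{\mu}_{2,N}$ with the continuous average $\tfrac{1}{T_N}\int_0^{T_N}X_t^2\,\ud t$.

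To this end I introduce the discretization error
\begin{equation*}
D_N := \widehat{\mu}_{2,N} - \frac{1}{T_N}\int_0^{T_N} X_t^2\,\ud t = \frac{1}{T_N}\sum_{k=1}^N \int_{t_{k-1}}^{t_k}\bigl(X_{t_k}^2 - X_t^2\bigr)\,\ud t
\end{equation*}
and estimate it in two ways. For the almost sure statement I use $|X_{t_k}^2 - X_t^2| \le |X_{t_k}-X_t|\,(|X_{t_k}|+|X_t|)$ together with the H\"older estimate of Lemma \ref{basic_for_X}, which on $[0,T_N]$ gives $|X_{t_k}-X_t|\le (Y_1+Y_2+Y_3)\Delta_N + Y_4\Delta_N^{H-\epsilon}$, so that
\begin{equation*}
|D_N| \le 2\Bigl(\sup_{s\in[0,T_N]}|X_s|\Bigr)\bigl[(Y_1+Y_2+Y_3)\Delta_N + Y_4\Delta_N^{H-\epsilon}\bigr].
\end{equation*}
The supremum and the constants $Y_2,Y_3,Y_4$ grow with $T_N$, but only slowly: their moment bounds (Lemma \ref{basic_for_X} and Remark \ref{rmk:y4}) together with a Borel--Cantelli argument show at most polynomial growth in $T_N$, while $\Delta_N^{H-\epsilon}\to 0$ polynomially fast because the mesh conditions force $\Delta_N=o(N^{-1/2})$ and hence $T_N=o(N^{1/2})$. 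Choosing $\epsilon$ small, $D_N\to 0$ almost surely, and Proposition \ref{lma:limit} yields $\widehat{\mu}_{2,N}\to\Psi(\theta)$ almost surely. Since $\Psi$ is bijective and continuous (Lemma \ref{lma:invertible}), so is $\Psi^{-1}$, and therefore $\widehat{\theta}_N\to\Psi^{-1}(\Psi(\theta))=\theta$ almost surely.

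For the central limit theorem I instead bound $D_N$ in $L^1$. By Cauchy--Schwarz, $\E|X_{t_k}^2-X_t^2|\le (\E|X_{t_k}-X_t|^2)^{1/2}(\E|X_{t_k}+X_t|^2)^{1/2}$; the second factor is bounded uniformly since $\sup_t\E X_t^2<\infty$, while for the first I write $X=U-e^{-\theta\cdot}\xi$ via (\ref{UX_connection}) and use the local behaviour of the variogram from Lemma \ref{lma:variogram} to get $\E|X_{t_k}-X_t|^2\lesssim |t_k-t|^{2H}$ for small increments. This yields $\E|D_N|\lesssim \Delta_N^{H}$, whence
\begin{equation*}
\sqrt{T_N}\,\E|D_N| \lesssim \sqrt{T_N}\,\Delta_N^{H} = \sqrt{N}\,\Delta_N^{H+\frac12} \le \sqrt{N}\,\Delta_N \longrightarrow 0,
\end{equation*}
using $H>\tfrac12$ and $N\Delta_N^2\to 0$. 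Thus $\sqrt{T_N}\,D_N\to 0$ in $L^1$, hence in probability, and combining with Theorem \ref{thm:limit2} (applied along $T=T_N\to\infty$) and Slutsky's theorem gives $\sqrt{T_N}(\widehat{\mu}_{2,N}-\Psi(\theta))\overset{\text{law}}\longrightarrow\mathcal{N}(0,\sigma^2)$.

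Finally I pass to $\widehat{\theta}_N$ by the delta method. The function $\Psi$ in (\ref{limit_function}) is smooth and strictly decreasing on $\R_+$, so $\Psi^{-1}$ is differentiable at $\Psi(\theta)$ with $(\Psi^{-1})'(\Psi(\theta))=1/\Psi'(\theta)\neq 0$. Since $\widehat{\mu}_{2,N}\to\Psi(\theta)$ almost surely, a first-order expansion of $\Psi^{-1}$ around $\Psi(\theta)$ gives $\sqrt{T_N}(\widehat{\theta}_N-\theta)=\frac{1}{\Psi'(\theta)}\sqrt{T_N}(\widehat{\mu}_{2,N}-\Psi(\theta))+o_{\P}(1)$, and the previous display yields the asymptotic normality with variance $\sigma_\theta^2=\sigma^2/[\Psi'(\theta)]^2$ as in (\ref{variance_theta}). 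I expect the main obstacle to be the almost sure control of $D_N$: unlike the $L^1$ estimate sufficient for the CLT, here the interval-wide H\"older constants and the supremum of $X$ grow with the observation window, so closing the argument requires quantifying this growth against the decay $\Delta_N^{H-\epsilon}$ through the moment bounds of Remark \ref{rmk:y4}.
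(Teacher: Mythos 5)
Your argument is correct, and it reaches the same two conclusions as the paper, but the route to the central limit theorem is genuinely different. For strong consistency you and the paper do essentially the same thing: bound the discretization error $D_N$ pathwise via Lemma \ref{basic_for_X}, control the growth of $\sup_{[0,T_N]}|X|$ and of the H\"older constants $Y_2,Y_3,Y_4$ by Borel--Cantelli applied to their moment bounds (Remark \ref{rmk:y4}), and trade that sub-polynomial growth against $\Delta_N^{H-\epsilon}=o(N^{-(H-\epsilon)/2})$ for $\epsilon$ small. Where you diverge is the negligibility of $\sqrt{T_N}D_N$: the paper pushes the same pathwise machinery further and proves $\sqrt{T_N}|D_N|\to 0$ \emph{almost surely}, which forces the more delicate choice $\epsilon<\min\{H-\tfrac12,\tfrac{H}{5}\}$ and the bookkeeping with $\gamma$ satisfying $1+2\epsilon<1+2\gamma<\tfrac34+\tfrac{H-\epsilon}{2}$; you instead prove only $\sqrt{T_N}D_N\to 0$ in $L^1$ (hence in probability), using stationarity, $\E\xi^2=\E U_0^2<\infty$, and the variogram asymptotics of Lemma \ref{lma:variogram} to get $\E|X_{t_k}-X_t|^2\lesssim|t_k-t|^{2H}$ and thus $\sqrt{T_N}\,\E|D_N|\lesssim\sqrt{N}\,\Delta_N^{H+\frac12}\le(N\Delta_N^2)^{1/2}\to 0$. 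Convergence in probability is all Slutsky requires, so your CLT proof is complete and is in fact cleaner at this step: it needs no $\epsilon$-tuning and no growth estimates on pathwise H\"older constants. What the paper's stronger almost sure statement buys is a single estimate serving both conclusions; what your split buys is a shorter, purely moment-based argument for the harder (CLT) half. The final delta-method step, using $\widehat{\mu}_{2,N}\to\Psi(\theta)$ a.s., the smoothness and strict monotonicity of $\Psi$ (so $(\Psi^{-1})'(\Psi(\theta))=1/\Psi'(\theta)\neq 0$), and Slutsky, coincides with the paper's Taylor-expansion argument.
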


\begin{proof}
Applying Lemma \ref{basic_for_X}, for any $\epsilon \in (0,H)$, we obtain
\begin{equation*}
\begin{split}
\sqrt{T_N} \Big\vert \widehat{\mu}_{2,N}  -  \frac{1}{T_N} &\int_0^{T_N}X_t^2\ud t \Big\vert =\frac{1}{\sqrt{T_N}}\left|\sum_{k=1}^N \int_{t_{k-1}}^{t_k}(X_{t_k}^2 - X_t^2)\ud t\right| \\
&\leq \frac{2}{\sqrt{T_N}}\left(\sum_{k=1}^N \sup_{u\in[t_{k-1},t_k]}|X_u|\int_{t_{k-1}}^{t_k}|X_{t_k} - X_t|\ud t\right)\\
&\leq \frac{2Y_1(H,\theta)}{\sqrt{T_N}}\left(\sum_{k=1}^N \sup_{u\in[t_{k-1},t_k]}|X_u|\int_{t_{k-1}}^{t_k}(t_k - t)\ud t\right)\\
&+\frac{2}{\sqrt{T_N}}\left(\sum_{k=1}^N \sup_{u\in[t_{k-1},t_k]}|X_u|      Y_2(H,\theta,[t_{k-1},t_k])\int_{t_{k-1}}^{t_k}(t_k - t)\ud t\right)\\
&+\frac{2}{\sqrt{T_N}}\left(\sum_{k=1}^N \sup_{u\in[t_{k-1},t_k]}|X_u|Y_3(H,\theta,[t_{k-1},t_k])\int_{t_{k-1}}^{t_k}(t_k - t)\ud t\right)\\
&+\frac{2}{\sqrt{T_N}}\left(\sum_{k=1}^N \sup_{u\in[t_{k-1},t_k]}|X_u|Y_4(H,\epsilon,[t_{k-1},t_k])\int_{t_{k-1}}^{t_k}(t_k - t)^{H-\epsilon}\ud t\right)\\
&=: I_1+I_2 + I_3+I_4.
\end{split}
\end{equation*}
We begin with last term $I_4$. Clearly we have
\begin{equation*}
\begin{split}
\sum_{k=1}^N \sup_{u\in[t_{k-1},t_k]}|X_u| \, Y_4(H,\epsilon,[t_{k-1},t_k]) \leq N \sup_{u\in[0,T_N]}|X_u| \, Y_4(H,\epsilon,[0,T_N]).
\end{split}
\end{equation*}
By Remark \ref{rmk:y4}, we have $\E Y_4(H,\epsilon,[0,T_N])^p \leq CT_N^{\epsilon p}$ for any $p \ge 1$. Hence using Markov's inequality, we obtain for every $\delta>0$ that
$$
\P\left(N^{-\gamma}Y_4(H,\epsilon,[0,T_N]) > \delta\right) \leq \frac{C^pT_N^{\epsilon p}}{N^{\gamma p}\delta^p}.
$$
Now by choosing $\epsilon<\gamma$ and $p$ large enough we obtain
$$
\sum_{N=1}^\infty \P\left(N^{-\gamma}Y_4(H,\epsilon,[0,T_N]) > \delta\right) < \infty.
$$
Consequently, Borel-Cantelli Lemma implies that 
$$
N^{-\gamma}Y_4(H,\epsilon,[0,T_N]) \rightarrow 0
$$
almost surely for any $\gamma >\epsilon$. Similarly, we obtain 
$$
N^{-\gamma}\sup_{u\in[0,T_N]}|X_u| \rightarrow 0
$$
almost surely for any $\gamma>0$. Consequently, we get
$$
\frac{1}{N^{1+2\gamma}}\sum_{k=1}^N \sup_{u\in[t_{k-1},t_k]}|X_u|Y_4(H,\epsilon,[t_{k-1},t_k])
\longrightarrow 0
$$
almost surely for any $\gamma > \epsilon$. Note also that by choosing $\epsilon>0$ small enough we can choose $\gamma$ in such way that $1+2\epsilon<1+2\gamma < \frac{3}{4} + \frac{H-\epsilon}{2}$. In particular, 
this is possible if $\epsilon < \min\left\{ H-\frac{1}{2},\frac{H}{5}\right\}$. With this choice we have
\begin{equation*}
\begin{split}
I_4& \leq \frac{2}{H-\epsilon+1}\sqrt{T_N}\Delta_N^{H-\epsilon}\frac{1}{N}\sum_{k=1}^N \sup_{u\in[t_{k-1},t_k]}|X_u|Y_4(H,\epsilon,[t_{k-1},t_k])\\
&= \frac{2}{H-\epsilon+1}\sqrt{T_N}\Delta_N^{H-\epsilon} N^{2\gamma} \frac{1}{N^{1+2\gamma}}\sum_{k=1}^N \sup_{u\in[t_{k-1},t_k]}|X_u|Y_4(H,\epsilon,[t_{k-1},t_k])\\
&\longrightarrow 0
\end{split}
\end{equation*}
almost surely, because the condition $N\Delta_N^2\rightarrow 0$ and our choice of $\gamma$ implies that 
\begin{equation*}
\begin{split}
\sqrt{T_N}\Delta_N^{H-\epsilon}N^{2\gamma} & =\left(N\Delta_N^{\frac{2H+1-2\epsilon}{1+4\gamma}}\right)^{2\gamma+\frac{1}{2}} \leq \left(N\Delta_N^2\right)^{2\gamma+\frac{1}{2}} \rightarrow 0.
\end{split}
\end{equation*}
Treating $I_1$, $I_2$, and $I_3$ in a similar way, we deduce that
\begin{equation}
\label{same_limit}
\sqrt{T_N}\left|\widehat{\mu}_{2,N} - \frac{1}{T_N}\int_0^{T_N}X_t^2\ud t\right|\rightarrow 0
\end{equation}
almost surely. Moreover, we have convergence (\ref{main_convergence}) by Lemma \ref{lma:limit}. To conclude the proof, we set $\mu = \Psi(\theta)$ and use Taylor's theorem to obtain
\begin{equation*}
\begin{split}
\sqrt{T_N}\left(\widehat{\theta}_N - \theta\right) &=\frac{\ud}{\ud \mu} \Psi^{-1} (\mu)\sqrt{T_N}\left(\widehat{\mu}_{2,N} - \Psi(\theta)\right) \\
&+ R_1(\widehat{\mu}_{2,N})\sqrt{T_N}\left(\widehat{\mu}_{2,N} - \Psi(\theta)\right)\\
&=\frac{\ud }{\ud \mu} \Psi^{-1} (\mu)\sqrt{T_N}\left(\frac{1}{T_N}\int_0^{T_N}X_t^2\ud t - \Psi(\theta)\right) \\
&+\frac{\ud }{\ud \mu} \Psi^{-1} (\mu)\sqrt{T_N}\left(\widehat{\mu}_{2,N} - \frac{1}{T_N}\int_0^{T_N}X_t^2\ud t\right) \\
&+ R_1(\widehat{\mu}_{2,N})\sqrt{T_N}\left(\widehat{\mu}_{2,N} - \Psi(\theta)\right)
\end{split}
\end{equation*}
for some reminder function $R_1(x)$ such that $R_1(x)\rightarrow 0$ when $x\rightarrow \Psi(\theta)$. Now, by continuity of $\frac{\ud}{\ud \mu} \Psi^{-1}$ and $\Psi^{-1}$, we have that $R_1$ is also continuous. Hence the result follows by 
using (\ref{same_limit}), Theorem \ref{thm:limit2}, Slutsky's theorem and the fact that
$$
\frac{\ud}{\ud \mu} \Psi^{-1}(\mu) = \frac{1}{\Psi'(\theta)}.
$$
\end{proof}

\begin{rmk}{ \rm
We remark that it is straightforward to construct strongly consistent estimator without the mesh restriction $\Delta_N \rightarrow 0$. However, in order to obtain central limit theorem using 
Theorem \ref{thm:limit2}, one need to pose the condition $\Delta_N \rightarrow 0$ to get the convergence
$$
\sqrt{T_N}\left|\widehat{\mu}_{2,N} - \frac{1}{T_N}\int_0^{T_N}X_t^2\ud t\right|\rightarrow 0.
$$
}
\end{rmk}

\begin{rmk}{ \rm
Notice that we obtained a consistent estimator which depends on the inverse of the function $\Psi$. Despite we proved that such inverse exists, but up to our knowledge there exists not an explicit formula for the 
inverse. Hence the inverse can be computed numerically.
}
\end{rmk}

\begin{rmk}{ \rm
Theorem \ref{main_theorem} imposes different conditions on the mesh $\Delta_N$ to obtain strong consistency of the estimator $\widehat{\theta}$. 
One possible choice for the mesh satisfying such conditions is $\Delta_N = \frac{\log N}{N}$.
}
\end{rmk}

\begin{rmk}{ \rm
Notice that we obtained strong consistency of the estimator $\widehat{\theta}$ without assuming uniform discretization of the partitions. The uniform discretization will play a role in estimating Hurst 
parameter $H$ and moreover for analysis of the estimator $\widetilde{\theta}$ (see Sections \ref{estimationH} and \ref{unknownH}).
}
\end{rmk}

\section{Estimation of Hurst parameter $H$}\label{estimationH}
There are different approaches to estimate Hurst parameter $H$ of fractional processes. Among all, here we consider an approach which is based on filtering of 
discrete observations of a sample path of process. For more details we refer to \cite{I-L,co}.\\

Let $\textbf{a}=(a_0,a_1, \cdots,a_L) \in \R^{L+1}$ be a filter of length $L+1, L \in \N$, and of order $p\ge 1$, i.e. for all indices $0 \le q < p$,
\begin{equation*}
 \sum_{j=0}^{L} a_j j^{q}=0 \quad \text{and} \quad \sum_{j=0}^{L} a_j j^{p}\neq 0.
\end{equation*}
We define the dilated filter $\textbf{a}^2$ associated to the filter $\textbf{a}$ by 
\begin{equation*}
{a}^2_{k} = \begin{cases}
& a_{k'},\quad k=2k'\\
& 0, \quad \text{otherwise}
\end{cases}
\end{equation*}
for $0\leq k\leq 2L$. Assume that we observe the process $X$ given by $(\ref{FOU2})$ at discrete time points $\{t_k = k\Delta_N, k=1,\ldots,N\}$ such that the mesh $\Delta_N \to 0$ as $N$ tends to infinity. We denote the generalized 
quadratic variations associated to filter $\textbf{a}$ by 
\begin{equation*}
V_{N,\textbf{a}} = \frac{1}{N} \sum_{i=0}^{N-L}\left(\sum_{j=0}^L a_j X_{(i+j)\Delta_N}\right)^2.
\end{equation*}
We consider the estimator $\widehat{H}_N$ given by
\begin{equation}
\label{h_est}
\widehat{H}_N = \frac{1}{2}\log_2 \frac{V_{N,\textbf{a}^2}}{V_{N,\textbf{a}}}.
\end{equation}

\textbf{Assumption (A)}:\\

We say the filter $\textbf{a}$ of the length $L+1$ and order $p$ satisfies in the assumption \textbf{(A)} if for any real number $r$ such that $0 < r < 2p$ and $r$ is not an even integer, the following property 
holds:
\begin{equation*}
\sum_{i=0}^{L} \sum_{j=0}^{L} a_i a_j \vert i - j \vert^{r} \neq 0.
\end{equation*}

\begin{exm}
A typical example of a filter with finite order satisfying the assumption \textbf{(A)} is $\textbf{a} = (1,-2,1)$ with order $p=2$.
\end{exm}

\begin{thm}\label{thm:Hestimation}
\label{thm:h_est}
Let $\textbf{a}$ be a filter of the order $p\geq 2$ satisfying in assumption \textbf{(A)}. Let the mesh $\Delta_N = N^{- \alpha}$ for some 
$\alpha \in (\frac{1}{2},\frac{1}{4H-2})$. Then
\begin{equation*}
\widehat{H}_N \longrightarrow H
\end{equation*}
almost surely as $N$ tends to infinity. Moreover, as $N$ tends to infinity, we have 
\begin{equation*}
\sqrt{N}(\widehat{H}_N-H))\overset{\text{law}}\longrightarrow \mathcal{N}(0,\Gamma(H,\theta,\textbf{a}))
\end{equation*}
where the variance $\Gamma$ depends on $H$, $\theta$ and the filter $\textbf{a}$ and is explicitly computed in \cite{co} and also given in \cite{b-i}.
\end{thm}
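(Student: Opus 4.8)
The plan is to reduce the statement to the general central limit theorem for filtered (generalized) quadratic variations of stationary Gaussian processes developed in \cite{co,b-i}. The estimator $\widehat{H}_N$ is precisely the Istas--Lang--Coeurjolly ratio estimator, and the cited works establish both strong consistency and $\sqrt{N}$-asymptotic normality whenever the underlying process is Gaussian with a variogram behaving like a power of exponent $2H$ near the origin and with a sufficiently fast global decorrelation. Thus the essential work is to verify that the fractional Ornstein--Uhlenbeck process of the second kind fits this framework, the two decisive inputs being Lemma \ref{lma:variogram} (local self-similarity of order $2H$) and Proposition \ref{covariance_fOU2} (exponential global decay).

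First I would replace the observed non-stationary process $X$ by the stationary process $U$. By (\ref{UX_connection}) the filtered value $\sum_{j=0}^L a_j X_{(i+j)\Delta_N}$ differs from $\sum_{j=0}^L a_j U_{(i+j)\Delta_N}$ by $-\xi e^{-\theta i\Delta_N}\sum_{j=0}^L a_j e^{-\theta j\Delta_N}$; since the filter has order $p\ge 2$ we have $\sum_j a_j=\sum_j a_j j=0$, so $\sum_j a_j e^{-\theta j\Delta_N}=O(\Delta_N^{2})$ and, after summation against the geometric factor $e^{-\theta i\Delta_N}$, this correction is negligible at the relevant scale. For the stationary process, $\sum_j a_j=0$ makes the constant $c(0)$ drop out and yields
\begin{equation*}
\E\Big(\sum_{j=0}^L a_j U_{(i+j)\Delta_N}\Big)^2=-\sum_{j,k=0}^L a_j a_k\,v_U(|j-k|\Delta_N).
\end{equation*}
Inserting Lemma \ref{lma:variogram} gives $\E V_{N,\mathbf{a}}\sim H\,c_{\mathbf{a},H}\,\Delta_N^{2H}$ with $c_{\mathbf{a},H}=-\sum_{j,k}a_j a_k|j-k|^{2H}$, which is nonzero and positive by Assumption \textbf{(A)} with the admissible, non-even exponent $r=2H\in(0,2p)$. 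Because the nonzero entries of $\mathbf{a}^2$ sit at doubled spacing, $c_{\mathbf{a}^2,H}=2^{2H}c_{\mathbf{a},H}$, so $\E V_{N,\mathbf{a}^2}/\E V_{N,\mathbf{a}}\to 2^{2H}$ and $\tfrac12\log_2 2^{2H}=H$; this fixes the estimator at the level of expectations.

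For the fluctuations I would write each centred variation $V_{N,\mathbf{a}}-\E V_{N,\mathbf{a}}$ as a second Wiener chaos functional of $\tilde{G}$, and study the bivariate vector obtained from $\mathbf{a}$ and $\mathbf{a}^2$. The correlations of the filtered increments decay, at intermediate lags, like $|m|^{2H-2p}$, so that $\sum_m|\cdot|^2<\infty$ precisely because $p\ge 2$ (covering the whole range $H\in(\tfrac12,1)$), while Proposition \ref{covariance_fOU2} guarantees the required decay at large lags; the fourth-moment criterion of Proposition \ref{CLT}, in its multivariate form, then delivers joint asymptotic normality of the normalised variations. Almost sure convergence $\widehat{H}_N\to H$ follows by a hypercontractivity-plus-Borel--Cantelli argument along the mesh $\Delta_N=N^{-\alpha}$, and the central limit theorem for $\widehat{H}_N=\tfrac12\log_2(V_{N,\mathbf{a}^2}/V_{N,\mathbf{a}})$ follows by the delta method applied to $(x,y)\mapsto\tfrac12\log_2(y/x)$, the limiting variance reproducing the explicit $\Gamma(H,\theta,\mathbf{a})$ of \cite{co,b-i}.

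The main obstacle will be the bias control needed to survive multiplication by $\sqrt{N}$. From the proof of Lemma \ref{lma:variogram} one reads off $v_U(t)=Ht^{2H}+O(t^{2})$, and although the order-$p\ge 2$ filter annihilates the pure $t^{2}$ contribution (indeed all even-integer powers below $2p$), the surviving remainder term of order $t^{2H+1}$ induces a bias of order $\Delta_N$ in $\widehat{H}_N$; requiring $\sqrt{N}\,\Delta_N=N^{\frac12-\alpha}\to 0$ is exactly what forces $\alpha>\tfrac12$. The complementary constraint $\alpha<\tfrac{1}{4H-2}$ delimits the range in which the local power-law approximation and the variance estimate remain valid uniformly along the shrinking mesh, matching the hypotheses of the quadratic-variation CLT of \cite{co,b-i}. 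Carrying out this uniform verification of the fourth-moment condition for the fractional Ornstein--Uhlenbeck process of the second kind, rather than for fractional Brownian motion itself, is the genuinely technical part of the argument.
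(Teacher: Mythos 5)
Your proposal is correct and follows essentially the same route as the paper: both reduce the theorem to the Istas--Lang/Coeurjolly/Brouste--Iacus theory of filtered quadratic variations, with Lemma \ref{lma:variogram} (local behaviour $v_U(t)\sim Ht^{2H}$) and the smoothness/decay of the remainder as the only inputs specific to $\text{fOU}_2$. The paper's own proof is just this reduction stated as a citation --- it records the expansion $v_U(t)=Ht^{2H}+r(t)$ together with the bound $r^{(4)}(t)\leq G|t|^{2H+1-\epsilon-4}$ and then invokes \cite[Theorem 3]{I-L} and \cite{b-i} --- whereas you unpack the internals of that cited machinery (dilated-filter ratio, chaos decomposition, bias control), which is consistent with what the references actually do.
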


\begin{rmk} { \rm
It is worth to mention that when $H < \frac{3}{4}$, it is not necessary to assume that the observation window $T_N= N \Delta_N$ tends to infinity, whereas 
 when $H \ge \frac{3}{4}$, we have to have that $T_N$ tends to infinity, see \cite{I-L}. Notice that $H \ge \frac{3}{4}$ if and only if $\frac{1}{4H-2} \le 1$.
 }
\end{rmk}

\begin{proof}[Proof of Theorem \ref{thm:Hestimation}]
Let $v_U$ denote the variogram of the process $U$. By Lemma \ref{lma:variogram}, we have
\begin{equation*}
v_U (t) = Ht^{2H} + r(t)
\end{equation*}
as $t\rightarrow 0^+$ where  $r(t)=o(t^{2H})$. Moreover, we have that $r(t)$ is differentiable and direct calculations show that for $\epsilon \in (0,1)$
\begin{equation*}
r^{(4)}(t) \leq G|t|^{2H+1-\epsilon-4}.
\end{equation*}
Hence the claim follows by following the proof in \cite{b-i} for the fractional Ornstein-Uhlenbeck process of the first kind and applying the results of Istas \& Lang \cite[Theorem 3]{I-L}. To conclude, we note that the 
given variance is also computed in \cite[page $223$]{co}. 

\end{proof}

\section{Estimation of drift parameter when $H$ is unknown}\label{unknownH}
In this section, instead of $\Psi(\theta)$ we consider $\Psi(\theta,H)$ to take into account the dependence on Hurst parameter $H$. Let $\mu=\Psi(\theta,H)$. Then implicit function theorem implies that there 
exists a continuously differentiable function $g(\mu,H)$ such that
$$
g(\mu,H) = \theta
$$
where $\theta$ is the unique solution to equation $\mu=\Psi(\theta,H)$. Hence for every fixed $H$, we have
$$
\frac{\partial g}{\partial\mu}(\mu,H) = \frac{1}{\frac{\partial\Psi}{\partial\theta}(\theta,H)}.
$$
Moreover, by chain rule we obtain
$$
0=\frac{\ud}{\ud H} g(\Psi(\theta,H),H) = \frac{\partial g}{\partial H} +\frac{\partial g}{\partial \mu}\frac{\partial \mu}{\partial H}.
$$
Here $\frac{\partial g}{\partial \mu}$ and $\frac{\partial\mu}{\partial H}$ are known, and so we can compute $\frac{\partial g}{\partial H}$. 
Let $\widehat{H}_N$ be given by (\ref{h_est}) for some filter $\textbf{a}$ of order $p\geq 2$ satisfying 
assumption \textbf{(A)} and $\widehat{\mu}_{2,N}$ by (\ref{mu_est}). We consider the estimator
\begin{equation}
\label{final_estimator}
\widetilde{\theta}_N = g(\widehat{\mu}_{2,N},\widehat{H}_N).
\end{equation}

Now with all the above assumptions and notations, we have the following result.

\begin{thm}\label{thm:main2}
Assume that mesh $\Delta_N = N^{-\alpha}$ for some number $\alpha \in (\frac{1}{2},\frac{1}{4H-2}\wedge 1)$. Then 
the estimator $\widetilde{\theta}_N$ given by $(\ref{final_estimator})$ is strongly consistent, i.e. as $N$ tends to infinity, we have
\begin{equation}
\label{final_est_convergence}
\widetilde{\theta}_N \longrightarrow \theta
\end{equation}
almost surely. Moreover, as $N$ tends to infinity, the following central limit theorem 
\begin{equation}
\label{final_est_limit_d}
\sqrt{T_N}\left(\widetilde{\theta}_N - \theta\right)\overset{\text{law}} \longrightarrow\mathcal{N}(0,\sigma_{\theta}^2)
\end{equation}
holds, where the variance $\sigma^2_{\theta}$ is given by (\ref{variance_theta}).
\end{thm}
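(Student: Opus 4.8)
The plan is to transfer the consistency and asymptotic normality already established for the two building blocks $\widehat{\mu}_{2,N}$ and $\widehat{H}_N$ through the smooth map $g$, exploiting the crucial fact that the estimation error of $H$ enters at a rate that is asymptotically negligible against $\sqrt{T_N}$. First I would verify that the prescribed mesh $\Delta_N = N^{-\alpha}$ with $\alpha \in (\frac12,\frac{1}{4H-2}\wedge 1)$ meets both sets of hypotheses at once. Indeed $\Delta_N\to 0$; the window $T_N = N\Delta_N = N^{1-\alpha}\to\infty$ since $\alpha<1$; and $N\Delta_N^2 = N^{1-2\alpha}\to 0$ since $\alpha>\frac12$, so Theorem \ref{main_theorem} applies, while the range $\alpha\in(\frac12,\frac{1}{4H-2})$ is precisely the hypothesis of Theorem \ref{thm:Hestimation}. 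Consequently $\widehat{\mu}_{2,N}\to\Psi(\theta,H)$ almost surely and $\widehat{H}_N\to H$ almost surely.

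For strong consistency I would simply invoke the continuity of $g$, guaranteed by the implicit function theorem, together with the two almost sure limits above, so that $\widetilde{\theta}_N = g(\widehat{\mu}_{2,N},\widehat{H}_N)\to g(\Psi(\theta,H),H) = \theta$ almost surely, which is (\ref{final_est_convergence}).

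For the central limit theorem I would perform a first-order Taylor expansion of $g$ about $(\Psi(\theta,H),H)$, writing
\begin{equation*}
\sqrt{T_N}(\widetilde{\theta}_N - \theta) = \frac{\partial g}{\partial \mu}\sqrt{T_N}\left(\widehat{\mu}_{2,N} - \Psi(\theta,H)\right) + \frac{\partial g}{\partial H}\sqrt{T_N}\left(\widehat{H}_N - H\right) + \text{(remainder)},
\end{equation*}
with the partial derivatives evaluated at $(\Psi(\theta,H),H)$ and the remainder controlled exactly as the term $R_1$ in the proof of Theorem \ref{main_theorem}. The first summand is handled just as before: combining (\ref{same_limit}), valid because the mesh conditions hold, with Theorem \ref{thm:limit2} and the identity $\frac{\partial g}{\partial \mu} = 1/\Psi'(\theta)$, one gets that $\frac{\partial g}{\partial \mu}\sqrt{T_N}(\widehat{\mu}_{2,N} - \Psi(\theta,H))\overset{\text{law}}\longrightarrow \mathcal{N}(0,\sigma^2/[\Psi'(\theta)]^2)$.

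The main obstacle is to show the second summand, which carries the error of $\widehat{H}_N$, is asymptotically negligible; this is what forces the limit variance to coincide with $\sigma_\theta^2$ and, pleasantly, removes any need to determine the joint law of $(\widehat{\mu}_{2,N},\widehat{H}_N)$. Here I would use that Theorem \ref{thm:Hestimation} makes $\sqrt{N}(\widehat{H}_N - H)$ convergent in law, hence tight, so that $|\widehat{H}_N - H| = O_P(N^{-1/2})$. Since $\sqrt{T_N} = N^{(1-\alpha)/2}$, the factor becomes $\sqrt{T_N}\,|\widehat{H}_N - H| = N^{-\alpha/2}O_P(1)\to 0$ in probability because $\alpha>0$; the Taylor remainder vanishes by the same rate estimates together with continuity of the derivatives of $g$ and the consistency of both estimators. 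Slutsky's theorem then yields (\ref{final_est_limit_d}) with variance $\sigma_\theta^2 = \sigma^2/[\Psi'(\theta)]^2$.
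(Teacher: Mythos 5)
Your proposal is correct and follows essentially the same route as the paper: both split the error into a $\mu$-part, handled by Theorem \ref{main_theorem} (equivalently Theorem \ref{thm:limit2} plus the mesh verification), and an $H$-part that is killed by the rate mismatch $\sqrt{T_N}\,(\widehat{H}_N-H)=\sqrt{\Delta_N}\cdot\sqrt{N}(\widehat{H}_N-H)\to 0$ from Theorem \ref{thm:h_est}, finishing with Slutsky. The only cosmetic difference is that you perform a single joint Taylor expansion of $g$ about $(\Psi(\theta,H),H)$, whereas the paper first telescopes through the intermediate point $g(\widehat{\mu}_{2,N},H)$ and then Taylor-expands only in the $H$ variable; the substance is identical.
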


\begin{proof}
First note that
\begin{equation}
\label{aux_equation}
\begin{split}
\sqrt{T_N}\left(\widetilde{\theta}_N - \theta\right) &=\sqrt{T_N}\left(g(\widehat{\mu}_{2,N},\widehat{H}_N) - g(\widehat{\mu}_{2,N},H)\right)\\
&+\sqrt{T_N} \Big( g(\widehat{\mu}_{2,N},H) - g(\mu,H) \Big).\\
\end{split}
\end{equation}
Now convergence 
$$
\sqrt{T_N}\Big( g(\widehat{\mu}_{2,N},H) - g(\mu,H) \Big) \overset{\text{law}}\longrightarrow\mathcal{N}(0,\sigma_{\theta}^2)
$$
is in fact Theorem \ref{main_theorem}. Moreover, using Taylor's theorem, we get that
\begin{equation*}
\begin{split}
\sqrt{T_N}\Big(g(\widehat{\mu}_{2,N},\widehat{H}_N) -& g(\widehat{\mu}_{2,N},H)\Big) = 
\frac{\partial g}{\partial H}(\widehat{\mu}_{2,N},H)\sqrt{T_N}(\widehat{H}_N - H) \\
&+ \frac{\partial g}{\partial H}(\widehat{\mu}_{2,N},H)R_2(\widehat{\mu}_{2,N},\widehat{H}_N)\sqrt{T_N}(\widehat{H}_N - H)
\end{split}
\end{equation*}
for some reminder function $R_2$ which converges to zero as $(\hat{\mu}_{2,N},\hat{H}_N)\rightarrow(\mu,H)$. Therefore, by continuity and Theorem \ref{thm:h_est}, as $N$ tends to infinity, we obtain
$$
\sqrt{T_N}\left(g(\widehat{\mu}_{2,N},\widehat{H}_N) - g(\widehat{\mu}_{2,N},H)\right)\longrightarrow 0
$$
in probability. Hence, using Slutsky's theorem we obtain
$$
\sqrt{T_N}\left(\widehat{\theta}_N - \theta\right)\overset{\text{law}}\longrightarrow\mathcal{N}(0,\sigma_{\theta}^2).
$$
To conclude the proof, we obtain (\ref{final_est_convergence}) from equation (\ref{aux_equation}) 
by continuous mapping theorem.
\end{proof}


\subsection*{Acknowledgments} 
The authors thank Lasse Leskel\"{a} for useful discussions and comments. Lauri Viitasaari thanks the Finnish Doctoral Programme in Stochastics and Statistics for financial support. Azmoodeh is supported by 
research project F1R-MTH-PUL-12PAMP. The authors thank both anonymous referees for careful reading of the previous version of this paper and for their valuable comments which improved the paper.

\appendix
\section{Computations used in the paper} 
\begin{lma}
\label{lma:proof_step3}
For $F_T$ given by (\ref{def:F_t}) and the variance $\sigma^2$ given by $(\ref{variance})$, as $T$ tends to infinity, we have
\begin{equation}
\label{proof_step3}
\hnorm{D_sF_T}^2 \overset{L^2(\Omega)}\longrightarrow 2\sigma^2.
\end{equation}
\end{lma}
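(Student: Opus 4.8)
The plan is to compute $\hnorm{D_sF_T}^2$ explicitly and show it concentrates at the constant $2\sigma^2$; this is exactly the content of the lemma and provides condition (ii) of Proposition \ref{CLT} with $q=2$. First I would compute the Malliavin derivative: since $F_T=\tfrac{1}{\sqrt{T}}I_2^{\tilde G}(\tilde g)$ with $\tilde g$ symmetric, one has $D_sF_T=\tfrac{2}{\sqrt{T}}I_1^{\tilde G}(\tilde g(s,\cdot))$. Writing $K(u,v)=\alpha_H H^{2H-2}e^{(u+v)(\frac1H-1)}|e^{u/H}-e^{v/H}|^{2H-2}$ for the kernel of the inner product on $\tilde{\mathcal H}$ from Remark \ref{inner_hilbert_G}, so that $\langle\varphi,\psi\rangle_{\tilde{\mathcal H}}=\int_0^T\int_0^T\varphi(u)\psi(v)K(u,v)\,\ud u\,\ud v$, I obtain
\[
\hnorm{D_sF_T}^2=\frac{4}{T}\int_0^T\int_0^T I_1^{\tilde G}(\tilde g(u,\cdot))\,I_1^{\tilde G}(\tilde g(v,\cdot))\,K(u,v)\,\ud u\,\ud v.
\]

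Next I would apply the product formula $I_1^{\tilde G}(\varphi)I_1^{\tilde G}(\psi)=I_2^{\tilde G}(\varphi\,\tilde\otimes\,\psi)+\langle\varphi,\psi\rangle_{\tilde{\mathcal H}}$, which splits the expression into a deterministic term and a pure second-chaos term,
\[
\hnorm{D_sF_T}^2=\frac{4}{T}\int_0^T\int_0^T\langle\tilde g(u,\cdot),\tilde g(v,\cdot)\rangle_{\tilde{\mathcal H}}\,K(u,v)\,\ud u\,\ud v+\frac{4}{T}I_2^{\tilde G}(\tilde g\otimes_1\tilde g),
\]
where $(\tilde g\otimes_1\tilde g)(x,y)=\int_0^T\int_0^T\tilde g(x,u)\tilde g(v,y)K(u,v)\,\ud u\,\ud v$ is the first contraction (here I use the symmetry of $\tilde g$ and the fact that $I_2^{\tilde G}$ symmetrizes its argument). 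The first, deterministic term is exactly $\E\hnorm{D_sF_T}^2$, and the general chaos identity $\E\hnorm{DF}^2=2\,\E F^2$ valid for $F\in\mathscr H_2$, combined with Lemma \ref{lma:proof_step4} (which yields $\E F_T^2\to\sigma^2<\infty$), shows that this term converges to $2\sigma^2$.

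It therefore remains to prove that the random part $\tfrac{4}{T}I_2^{\tilde G}(\tilde g\otimes_1\tilde g)$ tends to $0$ in $L^2(\Omega)$. By the isometry property of multiple Wiener integrals its $L^2$-norm is controlled by the contraction norm,
\[
\E\Big(\frac{4}{T}I_2^{\tilde G}(\tilde g\otimes_1\tilde g)\Big)^2\le\frac{32}{T^2}\,\Vert\tilde g\otimes_1\tilde g\Vert_{\tilde{\mathcal H}^{\otimes 2}}^2,
\]
so the whole statement reduces to the quantitative estimate $\Vert\tilde g\otimes_1\tilde g\Vert_{\tilde{\mathcal H}^{\otimes 2}}^2=o(T^2)$; in view of the ergodic normalization $\E F_T^2\to\sigma^2$ I would in fact expect the sharper bound $\Vert\tilde g\otimes_1\tilde g\Vert_{\tilde{\mathcal H}^{\otimes 2}}^2=O(T)$.

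The main obstacle is this last estimate. Here I would insert the explicit form $\tilde g(x,y)=\tfrac{1}{2\theta}\big(e^{-\theta|x-y|}-e^{-\theta(2T-x-y)}\big)$ together with the singular kernel $K$, and expand $\Vert\tilde g\otimes_1\tilde g\Vert_{\tilde{\mathcal H}^{\otimes 2}}^2$ as a finite sum of iterated integrals. I would split $\tilde g$ into its stationary part $e^{-\theta|x-y|}$ and its boundary correction $e^{-\theta(2T-x-y)}$; the boundary correction concentrates near $x,y\approx T$ and contributes only at a lower order, so the leading behaviour comes from the stationary part. For that part, the combination of the exponential damping $e^{-\theta|\cdot|}$ carried by $\tilde g$ with the local integrability of the singularity $|e^{u/H}-e^{v/H}|^{2H-2}$ along its diagonal — which holds precisely because $2H-2>-1$ under the standing assumption $H>\tfrac12$ — allows one to integrate out the "difference" variables while leaving at most one free "center" variable ranging over $[0,T]$, producing the $O(T)$ growth. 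Dividing by $T^2$ then yields convergence to $0$, and adding the deterministic limit $2\sigma^2$ completes the proof.
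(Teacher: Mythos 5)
Your proposal follows essentially the same route as the paper: the same product-formula decomposition of $\hnorm{D_sF_T}^2$ into the deterministic part $\E\hnorm{D_sF_T}^2\to 2\lim_T\E F_T^2=2\sigma^2$ (via Lemma \ref{lma:proof_step4}) plus a second-chaos remainder, and the same reduction of the remainder to the contraction estimate $\Vert\tilde g\otimes_1\tilde g\Vert_{\tilde{\mathcal{H}}^{\otimes 2}}^2=O(T)=o(T^2)$. The paper proves that last estimate by writing the contraction norm out as an eight-fold iterated integral, discarding the boundary term $e^{-\theta(2T-x-y)}$ via $e^{-\theta(2T-x-y)}\le e^{-\theta|x-y|}$, and integrating out variables one at a time using $\int_0^T e^{-\theta|x-y|}\,\ud x\le C$ and $\int_0^T K(x,y)\,\ud x\le C$ uniformly in $y$ and $T$ — which is exactly the computation you sketch in your final paragraph, so your outline is correct and only the explicit bookkeeping remains.
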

\begin{proof}
It is sufficient to show that as $T$ tends to infinity
\begin{equation}
\label{proof_step5}
\E\left[\hnorm{D_sF_T}^2 - \E\hnorm{D_sF_T}^2\right]^2 \rightarrow 0.
\end{equation}
Indeed, taking into account that 
$$
\lim_{T \to \infty}\E \hnorm{D_sF_T}^2 = 2 \lim_{T \to \infty}\E(F_T^2),
$$
we obtain that (\ref{proof_step5}) implies (\ref{proof_step3}).
Now we have
$$
D_s F_T = \frac{2}{\sqrt{T}}I_1^{\tilde{G}}(\tilde{g}(s,\cdot)).
$$
Hence using the Remark \ref{inner_hilbert_G}, we can write
\begin{equation*}
\begin{split}
\hnorm{D_sF_T}^2 &= \frac{4\alpha_HH^{2H-2}}{T}\int_0^T\int_0^TI_1^{\tilde{G}}(\tilde{g}(u,\cdot))I_1^{\tilde{G}}(\tilde{g}(v,\cdot))\\
& \hspace{3cm} \times e^{(u+v)\left(\frac{1}{H}-1\right)}\left|e^{\frac{u}{H}} - e^{\frac{v}{H}}\right|^{2H-2}\ud v\ud u.
\end{split}
\end{equation*}
Let now $K(u,v)$ denote the kernel associated to the space $\tilde{\mathcal{H}}$ i.e.
\begin{equation}
\label{kernel}
K(u,v) = e^{(u+v)\left(\frac{1}{H}-1\right)}\left|e^{\frac{u}{H}} - e^{\frac{v}{H}}\right|^{2H-2}.
\end{equation}
Using multiplicative formula for multiple Wiener integrals, we see that
\begin{equation*}
\begin{split}
I_1^{\tilde{G}}\left( \tilde{g}(u,\cdot) \right) & \, I_1^{\tilde{G}}\left( \tilde{g}(v,\cdot) \right)\\
&=\langle\tilde{g}(u,\cdot),\tilde{g}(v,\cdot)\rangle_{\tilde{\mathcal{H}}}+ 
I_2^{\tilde{G}}\left(\tilde{g}(u,\cdot)\tilde{\otimes} \tilde{g}(v,\cdot)\right)\\
&=: A_1(u,v) + A_2(u,v).
\end{split}
\end{equation*}
Here $A_1$ is deterministic and $A_2$ has expectation zero. Hence, in order to have (\ref{proof_step5}), we need to show that
\begin{equation}
\label{simple_conv}
\E\left[\frac{1}{T}\int_0^T\int_0^T A_2(u,v) K(u,v)\ud v\ud u\right]^2\rightarrow 0.
\end{equation}
Therefore, by applying Fubini's Theorem, it suffices to show that
\begin{equation}
\label{apu_conv}
\begin{split}
&\frac{1}{T^2}\int_{[0,T]^4} \E\left[A_2(u_1,v_1)A_2(u_2,v_2)\right]\\
&\hskip 3cm \times K(u_1,v_1)K(u_2,v_2)\ud u_1\ud v_1 \ud u_2\ud v_2 \rightarrow 0
\end{split}
\end{equation}
as $T$ tends to infinity. First we get that
\begin{equation*}
\begin{split}
&\E\left[A_2(u_1,v_1)A_2(u_2,v_2)\right]\\
&=2 \int_{[0,T]^4}\left(\tilde{g}(u_1,\cdot)\tilde{\otimes}\tilde{g}(v_1,\cdot)\right)(x_1,y_1)
\left(\tilde{g}(u_2,\cdot)\tilde{\otimes}\tilde{g}(v_2,\cdot)\right)(x_2,y_2)\\
& \hskip 3cm\times K(x_1,x_2)K(y_1,y_2)\ud x_1\ud y_1 \ud x_2\ud y_2.
\end{split}
\end{equation*}
By plugging into (\ref{apu_conv}) we obtain that it suffices to have
\begin{equation}
\label{apu_conv2}
\begin{split}
&\frac{1}{T^2}\int_{[0,T]^8} \left(\tilde{g}(u_1,\cdot)\tilde{\otimes}\tilde{g}(v_1,\cdot)\right)(x_1,y_1)
\left(\tilde{g}(u_2,\cdot)\tilde{\otimes}\tilde{g}(v_2,\cdot)\right)(x_2,y_2)\\
& \hskip 3cm\times K(x_1,x_2)K(y_1,y_2)K(u_1,v_1)K(u_2,v_2)\\
& \hskip 3cm \ud v_1 \ud u_2\ud v_2 \ud u_1\ud x_1\ud y_1 \ud x_2\ud y_2\rightarrow 0
\end{split}
\end{equation}
as $T$ tends to infinity. Here we have
\begin{equation*}
\begin{split}
\left(\tilde{g}(u,\cdot)\tilde{\otimes}\tilde{g}(v,\cdot)\right)(x,y) =\frac{1}{2}\left[\tilde{g}(u,x)\tilde{g}(v,y)+\tilde{g}(u,y)\tilde{g}(v,x)\right].
\end{split}
\end{equation*}
Note first that for every $0\leq x,y\leq T$, we have that
\begin{equation*}
e^{-\theta(2T-x-y)}\leq e^{-\theta|x-y|}.
\end{equation*}
As a consequence, we can omit the term $e^{-\theta(2T-x-y)}$ on function $\tilde{g}(x,y)$. This implies that instead of
$$
\left(\tilde{g}(u_1,\cdot)\tilde{\otimes}\tilde{g}(v_1,\cdot)\right)(x_1,y_1)
\left(\tilde{g}(u_2,\cdot)\tilde{\otimes}\tilde{g}(v_2,\cdot)\right)(x_2,y_2)
$$
it is sufficient to consider the following integrand:
\begin{equation}
\label{enough_function}
\begin{split}
&e^{-\theta|u_1-x_1|}e^{-\theta|v_1-y_1|}e^{-\theta|u_2-x_2|}e^{-\theta|v_2-y_2|}\\
&+e^{-\theta|u_1-x_1|}e^{-\theta|v_1-y_1|}e^{-\theta|u_2-y_2|}e^{-\theta|v_2-x_2|}\\
&+e^{-\theta|u_1-y_1|}e^{-\theta|v_1-x_1|}e^{-\theta|u_2-x_2|}e^{-\theta|v_2-y_2|}\\
&+e^{-\theta|u_1-y_1|}e^{-\theta|v_1-x_1|}e^{-\theta|u_2-y_2|}e^{-\theta|v_2-x_2|}.
\end{split}
\end{equation}
Next we consider the first term and show that
\begin{equation}
\label{apu_conv3}
\begin{split}
&\frac{1}{T^2}\int_{[0,T]^8} e^{-\theta|u_1-x_1|}e^{-\theta|v_1-y_1|}e^{-\theta|u_2-x_2|}e^{-\theta|v_2-y_2|}\\
& \hskip 2 cm\times K(x_1,x_2)K(y_1,y_2)K(u_1,v_1)K(u_2,v_2)\\
& \hskip 2.5cm \ud u_1\ud v_1 \ud u_2\ud v_2 \ud x_1\ud y_1 \ud x_2\ud y_2\rightarrow 0.
\end{split}
\end{equation}
In what follows $C$ is a non-important constant which may vary from line to line. First it is easy to prove that
\begin{equation}
\label{apu_easy}
\int_0^T e^{-\theta|x-y|}\ud x \leq C,
\end{equation}
where constant does not depend on $y$ or $T$. Moreover, by change of variable we obtain 
\begin{equation}
\label{apu_easy2}
\int_0^T  K(x,y)\ud x \leq 2HB(1-H,2H-1)
\end{equation}
for every $y$ and $T$. Consider now the iterated integral in (\ref{apu_conv3}). The value of the integral depends on the order
of the variables, and eight variables can be ordered in $8! = 40320$ ways. However, it is clear that 
without loss of generality we can choose the smallest variable, let's say $y_2$, and integrate
over region $\{0<y_2<u_1,u_2,v_1,v_2,x_1,x_2,y_1<T\}$. Other cases can be treated similarly with obvious changes. 
Assume now that the smallest variable is $y_2$ and denote the second smallest variable by $r_7$, i.e.
\begin{equation*}
r_7 = \min(u_1,u_2,v_1,v_2,x_1,x_2,y_1).
\end{equation*}
Integrating first with respect to $y_2$ and applying upper bound $e^{\theta y_2} \leq e^{\theta r_7}$ together with (\ref{apu_easy2}), we obtain that
\begin{equation*}
\begin{split}
&\int_{[0,T]^7}\int_0^{r_7} e^{-\theta|u_1-x_1|}e^{-\theta|v_1-y_1|}e^{-\theta|u_2-x_2|}e^{-\theta v_2 + \theta y_2}\\
&\times K(x_1,x_2)K(y_1,y_2)K(u_1,v_1)K(u_2,v_2)
\ud y_2\ud u_1\ud v_1 \ud u_2\ud v_2 \ud x_1\ud y_1 \ud x_2\\
&\leq C\int_{[0,T]^7}e^{-\theta|u_1-x_1|}e^{-\theta|v_1-y_1|}e^{-\theta|u_2-x_2|}e^{-\theta v_2 + \theta r_7}\\
&\times K(x_1,x_2)K(u_1,v_1)K(u_2,v_2)
\ud u_1\ud v_1 \ud u_2\ud v_2 \ud x_1\ud y_1 \ud x_2.
\end{split}
\end{equation*}
Next we integrate with respect to $y_1$. In the case when  $r_7=y_1$, we have
\begin{equation*}
\int_0^{r_6} e^{-\theta(v_1+v_2-2y_1)}\ud y_1 \leq Ce^{-\theta(v_1+v_2-2r_6)} \leq C,
\end{equation*}
where $r_6$ is the third smallest variable, and in the case when $r_7 \neq y_1$, we obtain by (\ref{apu_easy})
\begin{equation*}
\int_0^T e^{-\theta|v_1-y_1|}e^{-\theta v_2 + \theta r_7}\ud y_1 \leq C.
\end{equation*}
Hence we obtain upper bound
\begin{equation*}
\begin{split}
&\int_{[0,T]^7}e^{-\theta|u_1-x_1|}e^{-\theta|v_1-y_1|}e^{-\theta|u_2-x_2|}e^{-\theta v_2 + \theta r_7}\\
&\times K(x_1,x_2)K(u_1,v_1)K(u_2,v_2)
\ud u_1\ud v_1 \ud u_2\ud v_2 \ud x_1\ud y_1 \ud x_2\\
&\leq C\int_{[0,T]^6}e^{-\theta|u_1-x_1|}e^{-\theta|u_2-x_2|}\\
&\times K(x_1,x_2)K(u_1,v_1)K(u_2,v_2)
\ud u_1\ud v_1\ud v_2 \ud x_1\ud u_2 \ud x_2.
\end{split}
\end{equation*}
Next we integrate first with respect to variables $v_1$ and $v_2$ and then with respect to variables $u_1$ and $u_2$. 
Together with estimates (\ref{apu_easy}) and (\ref{apu_easy2}) this yields
\begin{equation*}
\begin{split}
&\int_{[0,T]^6}e^{-\theta|u_1-x_1|}e^{-\theta|u_2-x_2|}\\
&\times K(x_1,x_2)K(u_1,v_1)K(u_2,v_2)
\ud v_1\ud v_2\ud u_1 \ud u_2\ud x_1 \ud x_2\\
&\leq C \int_{[0,T]^4}e^{-\theta|u_1-x_1|}e^{-\theta|u_2-x_2|}K(x_1,x_2)
\ud u_1 \ud u_2\ud x_1 \ud x_2\\
&\leq C\int_{[0,T]^2}K(x_1,x_2)\ud x_1 \ud x_2\\
&\leq CT.
\end{split}
\end{equation*}
Hence we have (\ref{apu_conv3}). It remains to note that other three terms in (\ref{enough_function}) can be treated with 
the same arguments since only the ''pairing'' of variables in terms of form $e^{-\theta|x-y|}$ changes. Thus we have (\ref{apu_conv2}) 
and implications \\(\ref{apu_conv2})$\Rightarrow$(\ref{simple_conv})$\Rightarrow$(\ref{proof_step5})
$\Rightarrow$(\ref{proof_step3}) complete the proof.
\end{proof}

\begin{lma}
\label{lma:proof_step4}
For $F_T$ given by (\ref{def:F_t}), as $T$ tends to infinity, we have
\begin{equation}
\label{proof_step4}
\E[F_T^2]  \longrightarrow \sigma^2.
\end{equation}
\end{lma}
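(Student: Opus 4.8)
The plan is to reduce the claim to the norm identity
\[
\E[F_T^2] = \frac{2}{T}\Vert\tilde{g}\Vert_{\tilde{\mathcal{H}}^{\otimes 2}}^2,
\]
already recorded in the proof of Theorem \ref{thm:limit2}, which is nothing but the isometry for the double Wiener integral combined with the symmetry of $\tilde{g}$. Hence it suffices to show $\frac{1}{T}\Vert\tilde{g}\Vert_{\tilde{\mathcal{H}}^{\otimes 2}}^2\to\frac{\sigma^2}{2}$. Using the kernel $K$ from (\ref{kernel}) and the inner product of Remark \ref{inner_hilbert_G}, I would first write the squared norm as the four-fold integral
\[
\Vert\tilde{g}\Vert_{\tilde{\mathcal{H}}^{\otimes 2}}^2 = \alpha_H^2 H^{4H-4}\int_{[0,T]^4}\tilde{g}(x_1,x_2)\,\tilde{g}(y_1,y_2)\,K(x_1,y_1)\,K(x_2,y_2)\,\ud x_1\ud x_2\ud y_1\ud y_2,
\]
and then discard the boundary term of $\tilde{g}$: by the same inequality $e^{-\theta(2T-x-y)}\le e^{-\theta|x-y|}$ used in the proof of Lemma \ref{lma:proof_step3}, the contribution of the $e^{-\theta(2T-x-y)}$ piece is concentrated near the corner $x_i,y_i\approx T$ and is of lower order than $T$, hence negligible after division by $T$; one may therefore replace $\tilde{g}(x,y)$ by $\frac{1}{2\theta}e^{-\theta|x-y|}$.

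The fact driving the argument is that $K$ is translation invariant; in fact it depends only on the difference of its arguments, $K(u,v)=\left|2\sinh\frac{u-v}{2H}\right|^{2H-2}$, since a direct computation gives $K(u+r,v+r)=K(u,v)$ (the exponents $2r(\frac1H-1)$ and $r\frac{2H-2}{H}$ cancel). Consequently the whole integrand above is invariant under the simultaneous shift of all four variables. I would change to coordinates $(x_1,a,b,c)$ with $a=x_2-x_1$, $b=y_1-x_1$, $c=y_2-x_1$; the integrand no longer depends on $x_1$, so integrating $x_1$ over its admissible range produces the factor $T$ up to edge corrections of lower order, and dividing by $T$ and letting $T\to\infty$ by dominated convergence gives
\[
\frac{1}{T}\Vert\tilde{g}\Vert_{\tilde{\mathcal{H}}^{\otimes 2}}^2\longrightarrow \frac{\alpha_H^2 H^{4H-4}}{4\theta^2}\int_{\R^3}e^{-\theta|a|}\,e^{-\theta|b-c|}\,K(0,b)\,K(a,c)\,\ud a\,\ud b\,\ud c.
\]

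It then remains to match this limit with $\frac{\sigma^2}{2}$. Spelling out $K(0,b)$ and $K(a,c)$ and performing the orientation-reversing substitution $(a,b,c)=(-x,-y,-z)$ turns the integrand into exactly the one in (\ref{variance}); the reduction of the domain from $\R^3$ to the octant $[0,\infty)^3$, and the attendant numerical factor, is supplied by the symmetries of the integrand, which is invariant both under $(a,b,c)\mapsto(-a,-b,-c)$ and under the two measure-preserving maps $(a,b,c)\mapsto(-a,c-a,b-a)$ and $(a,b,c)\mapsto(c-b,-b,a-b)$ inherited from the symmetry of $\tilde{g}$ and of the kernel pairing, so that folding $\R^3$ by this group reproduces the constant $\frac{2\alpha_H^2 H^{4H-4}}{\theta^2}$ and the domain of (\ref{variance}). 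Finiteness $\sigma^2<\infty$ is transparent from this form: the singularities of $K$ along $\{b=0\}$ and $\{a=c\}$ are of order $2H-2\in(-1,0)$, hence integrable for $H>\frac12$, while $e^{(1-\frac1H)(x+y+z)}$ and $e^{-\theta x-\theta|y-z|}$ give the decay at infinity. The main obstacle is the rigorous $T\to\infty$ passage: one must produce a $T$-independent integrable majorant (furnished by this same integrability estimate) so that dominated convergence applies, and control the edge corrections incurred when extracting the factor $T$ from the diagonal direction; once these are settled, both the convergence and the identification of the limiting constant follow from the bookkeeping above.
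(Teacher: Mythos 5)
Your proposal is correct, and it reaches the limit by a genuinely different route than the paper. The paper, after the same two preliminary steps (the isometry $\E[F_T^2]=\frac{2}{T}\Vert\tilde{g}\Vert_{\tilde{\mathcal{H}}^{\otimes 2}}^2$ and the removal of the $e^{-\theta(2T-x-y)}$ part of $\tilde{g}$), applies L'H\^opital's rule: it differentiates the four-fold integral $I_T$ in $T$, which collapses it to a three-fold integral over $[0,T]^3$ with one variable pinned at $T$, and then shifts by $T$ and lets $T\to\infty$; this is how the octant $[0,\infty)^3$ and the constant $\frac{2\alpha_H^2H^{4H-4}}{\theta^2}$ in (\ref{variance}) arise directly. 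You instead observe that the kernel (\ref{kernel}) is stationary, $K(u,v)=\bigl|2\sinh\frac{u-v}{2H}\bigr|^{2H-2}$ (the exponent cancellation you cite is correct), pass to difference coordinates, and integrate out the free variable to extract the factor $T$; dominated convergence is then immediate because the limiting integrand $\Phi(a,b,c)=e^{-\theta|a|}e^{-\theta|b-c|}K(0,b)K(a,c)$ is in $L^1(\R^3)$ (e.g.\ bound $e^{-\theta|b-c|}\le 1$ and use $\int_{\R}K(0,w)\,\ud w<\infty$ for $H>\frac12$), and the ratio of admissible $x_1$-measure to $T$ is bounded by $1$ and tends to $1$ pointwise. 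This both settles the ``edge corrections'' you flag as the main obstacle and gives finiteness of $\sigma^2$ more transparently than the paper's explicit Beta-function computations. Two points deserve care. First, the domination $e^{-\theta(2T-x-y)}\le e^{-\theta|x-y|}$ by itself only shows the discarded piece is at most of the order of the main term; you need the (easy) quantitative bound $\int_{[0,T]^2}e^{-\theta(2T-x-y)}\ud x\,\ud y\le\theta^{-2}$, i.e.\ the $O(1)$ estimate the paper carries out, to conclude it is $o(T)$. Second, your folding argument is essentially right but the bookkeeping should be stated precisely: the factor $4$ comes from the order-four group generated by the two base-point changes $(a,b,c)\mapsto(-a,c-a,b-a)$ and $(a,b,c)\mapsto(c-b,-b,a-b)$, whose images of the octant are exactly the four essentially disjoint regions ``the $i$-th of the original four time variables is extremal'' and hence tile $\R^3$; the point reflection $(a,b,c)\mapsto(-a,-b,-c)$ must \emph{not} be counted in the folding (its image of $(-\infty,0]^3$ overlaps the others on a set of positive measure) and serves only to carry $(-\infty,0]^3$ onto $[0,\infty)^3$ so as to match the integrand of (\ref{variance}) literally. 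With that reading, $\frac{\alpha_H^2H^{4H-4}}{4\theta^2}\int_{\R^3}\Phi=\frac{\alpha_H^2H^{4H-4}}{\theta^2}\int_{[0,\infty)^3}\Phi\circ\iota=\frac{\sigma^2}{2}$, and the constants agree with the paper's answer.
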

\begin{proof}
First using isometry property, we obtain
\begin{equation*}
\E[F_T^2] =\frac{2}{T} \Vert\tilde{g}\Vert_{\tilde{\mathcal{H}}^{\otimes 2}}^2 =: \frac{2I_T}{T}
\end{equation*}
where
\begin{equation*}
\begin{split}
I_T = &\alpha_H^2H^{4H-4}\int_{[0,T]^{4}} \tilde{g}(u_1,v_1)\tilde{g}(u_2,v_2) e^{\left(\frac{1}{H}-1\right)(u_1+v_1+u_2+v_2)}\\
& \times \left|e^{\frac{u_2}{H}}-e^{\frac{u_1}{H}}\right|^{2H-2}\left|e^{\frac{v_2}{H}}-e^{\frac{v_1}{H}}\right|^{2H-2}\ud u_1\ud u_2\ud v_1\ud v_2.
\end{split}
\end{equation*}
Recall that 
$$
\tilde{g}(x,y)= \frac{1}{2\theta}e^{-\theta|x-y|} - \frac{1}{2\theta}e^{-\theta(2T-x-y)}.
$$
We first show that we can omit the second term $\frac{1}{2\theta}e^{-\theta(2T-x-y)}$ in the function $\tilde{g}$. To see this, we have
\begin{equation*}
\begin{split}
\int_{[0,T]^{4}}&   e^{-\theta(2T-u_1-v_1)}\tilde{g}(u_2,v_2) e^{\left(\frac{1}{H}-1\right)(u_1+v_1+u_2+v_2)}\\
&\times\left|e^{\frac{u_2}{H}}-e^{\frac{u_1}{H}}\right|^{2H-2} \left|e^{\frac{v_2}{H}}-e^{\frac{v_1}{H}}\right|^{2H-2}\ud u_1\ud u_2\ud v_1\ud v_2\\
&\leq C(\theta) \int_{[0,T]^{4}} e^{-\theta(2T-u_1-v_1)} e^{\left(\frac{1}{H}-1\right)(u_1+v_1+u_2+v_2)}\\
&\times \left|e^{\frac{u_2}{H}}-e^{\frac{u_1}{H}}\right|^{2H-2}\left|e^{\frac{v_2}{H}}-e^{\frac{v_1}{H}}\right|^{2H-2}\ud u_1\ud u_2\ud v_1\ud v_2\\
&= C(\theta) \left[\int_0^T\int_0^T e^{-\theta (T-v) + \left(\frac{1}{H}-1\right)(v+u)}\left|e^{\frac{u}{H}}-e^{\frac{v}{H}}\right|^{2H-2}
\ud v\ud u\right]^2.
\end{split}
\end{equation*}
By change of variables $\tilde{v}=T-v$, $\tilde{u}=T-u$, and then $x=e^{-\frac{\tilde{v}}{H}}$, $y=e^{-\frac{\tilde{u}}{H}}$ this is same as
\begin{equation*}
\left[\int_{e^{-\frac{T}{H}}}^1\int_{e^{-\frac{T}{H}}}^1 x^{(\theta-1)H}y^{-H}|y-x|^{2H-2}\ud x\ud y\right]^2.
\end{equation*}
Let now $x<y$. By change of variables $z=\frac{x}{y}$, we obtain
\begin{equation*}
\begin{split}
 \int_{e^{-\frac{T}{H}}}^1\int_{e^{-\frac{T}{H}}}^y &  x^{(\theta-1)H}y^{-H}|y-x|^{2H-2}\ud x\ud y\\
&\leq \int_0^1\int_0^1 y^{\theta H - 1} z^{(\theta-1)H}(1-z)^{2H-2}\ud z \ud y\\
&\leq \frac{1}{\theta H}B((\theta-1)H+1,2H-1)
\end{split}
\end{equation*}
which converges to zero when divided with $T$ and let $T$ tends to infinity. The case $x>y$, can be treated in a similar way. Hence it is sufficient to consider the function 
$$
\frac{1}{2\theta}e^{-\theta|x-y|}
$$
instead of $\tilde{g}(x,y)$. We shall use L'Hopital's rule to compute the limit. Taking derivative with respect to $T$, we obtain
\begin{equation*}
\begin{split}
\frac{\ud I_T}{\ud T} &= \frac{\alpha_H^2H^{4H-4}}{\theta^2}\int_{[0,T]^{3}} e^{-\theta|T-u_1|}e^{-\theta|u_2-v_2|} e^{\left(\frac{1}{H}-1\right)(T+u_1+u_2+v_2)}\\
&\times \left|e^{\frac{u_2}{H}}-e^{\frac{u_1}{H}}\right|^{2H-2} \left|e^{\frac{T}{H}}-e^{\frac{v_1}{H}}\right|^{2H-2}\ud u_1\ud u_2\ud v_1\ud v_2.
\end{split}
\end{equation*}
By change of variables $x=T-u_1$, $y=T - u_2$ and $z=T-v_1$, this reduces to
\begin{equation*}
\begin{split}
\frac{\ud I_T}{\ud T} &= \frac{\alpha_H^2H^{4H-4}}{\theta^2}\int_{[0,T]^{3}} e^{-\theta x}e^{-\theta|y-z|} e^{\left(1-\frac{1}{H}\right)(x+y+z)}\\
&\times \left(1-e^{-\frac{y}{H}}\right)^{2H-2} \left|e^{-\frac{x}{H}}-e^{-\frac{z}{H}}\right|^{2H-2}\ud z\ud x\ud y.
\end{split}
\end{equation*}
Therefore, we have
\begin{equation*}
\begin{split}
\lim_{T \to \infty} \frac{\ud I_T}{\ud T} &= \frac{\alpha_H^2H^{4H-4}}{\theta^2}\int_{[0,\infty)^{3}} e^{-\theta x}e^{-\theta|y-z|} e^{\left(1-\frac{1}{H}\right)(x+y+z)}\\
&\times \left(1-e^{-\frac{y}{H}}\right)^{2H-2} \left|e^{-\frac{x}{H}}-e^{-\frac{z}{H}}\right|^{2H-2}\ud z\ud x\ud y.
\end{split}
\end{equation*}
We end the proof by showing that the later triple integral, denoted by $I$, is finite. Use the obvious bound  $e^{-\theta |z-y|} \le 1 $, we infer that
\begin{equation*}
\begin{split}
I &\leq \int_{[0,\infty)^{3}} e^{-\theta x} e^{\left(1-\frac{1}{H}\right)(x+y+z)}\\
&\times \left(1-e^{-\frac{y}{H}}\right)^{2H-2}\left|e^{-\frac{x}{H}}-e^{-\frac{z}{H}}\right|^{2H-2}\ud z\ud x\ud y\\
&= \left[\int_0^\infty e^{\left(1-\frac{1}{H}\right)y}\left(1-e^{-\frac{y}{H}}\right)^{2H-2}\ud y\right]\\
&\times \left[\int_0^\infty\int_0^\infty e^{-\theta x} e^{\left(1-\frac{1}{H}\right)(x+z)}\left|e^{-\frac{x}{H}}-e^{-\frac{z}{H}}\right|^{2H-2}\ud z\ud x\right]\\
&=I_1\times I_2.
\end{split}
\end{equation*}
For the term $I_1$, we obtain by change of variable $u=e^{-\frac{y}{H}}$ that
\begin{equation*}
I_1 = C\int_0^1 u^{-H}(1-u)^{2H-2}\ud u < \infty.
\end{equation*}
For the term $I_2$, we obtain by change of variables $u=e^{-\frac{x}{H}}$ and $v=e^{-\frac{z}{H}}$ that 
\begin{equation*}
\begin{split}
I_2 &= C\int_0^1\int_0^1 u^{(\theta-1)H}v^{-H}|u-v|^{2H-2}\ud u \ud v\\
&= \left[\int_0^1\int_0^u  + \int_0^1\int_u^1\right]u^{(\theta-1)H}v^{-H}|u-v|^{2H-2}\ud v\ud u\\
&= I_{2,1}+I_{2,2}.
\end{split}
\end{equation*}
For the term $I_{2,1}$, we obtain by change of variable $z=\frac{v}{u}$ that
\begin{equation*}
I_{2,1} = C \int_0^1 u^{\theta H-1} \int_0^1 z^{-H}(1-z)^{2H-2}\ud z \ud u = \frac{1}{\theta H} B(1-H,2H-1).
\end{equation*}
Similarly for the term $I_{2,2}$, we get by change of variable $z=\frac{u}{v}$ that
\begin{equation*}
I_{2,2} = \frac{1}{\theta H}B((\theta -1)H + 1, 2H-1).
\end{equation*}
\end{proof}

\end{document}